\def\dual                 {{\vee}}
\def\ee                 {{\rm e}}
\def\RR                 {{\mathbb R}}
\def\CC                 {{\mathbb C}}
\newtheorem{lemma}{Lemma}[section]
\newtheorem{theorem}[lemma]{Theorem}
\newtheorem{corollary}[lemma]{Corollary}
\newtheorem{proposition}[lemma]{Proposition}
\theoremstyle{definition}
\newtheorem{definition}[lemma]{Definition}
\newtheorem{remark}[lemma]{Remark}
\theoremstyle{remark}
\newtheorem*{proof*}{Proof}
\numberwithin{equation}{section}
\title{On stringy cohomology spaces}
\author{Lev A. Borisov}
\address{Mathematics Department, Rutgers University, 110 Frelinghuysen Rd, Piscataway, NJ 08540, USA}
\email{borisov@math.rutgers.edu}
\begin{document}
\begin{abstract}
We modify the definition of the families of $A$ and $B$ 
stringy cohomology spaces associated to a pair of
dual reflexive Gorenstein cones. The new spaces have the same dimension as the ones defined in the joint paper with Mavlyutov \cite{BM}, but they admit natural flat connections
with respect to the appropriate parameters. This solves a longstanding question of relating GKZ hypergeometric system to stringy cohomology. 
We construct products on these spaces
by vertex algebra techniques. In the process, we fix a minor gap in \cite{BM} and
prove a statement on intersection cohomology of dual cones that may be of independent interest.
\end{abstract}

\maketitle

\section{Introduction}\label{intro}
In the early history of mirror symmetry, the duality of Hodge numbers played an important role.
Specifically, for three-dimensional mirror Calabi-Yau varieties $X$ and $X^\dual$ one 
has $h^{1,1}(X)=h^{1,2}(X^\dual)$ and $h^{1,2}(X)=h^{1,1}(X^\dual)$. In general, for $n$-dimensional
mirror Calabi-Yau varieties one may expect
\begin{equation}\label{1}
h^{p,q}(X)=h^{n-p,q}(X^\dual),
\end{equation}
however there are complications. The initial obstacle is that higher dimensional singular  Calabi-Yau 
varieties often do not admit crepant resolutions, and if one tries to verify  \eqref{1}
for singular mirrors, it simply fails. In this sense, usual Hodge numbers are ill-suited for higher dimensional mirror symmetry.

An elegant solution to this difficulty has been obtained in \cite{Batyrev-Dais, Batyrev-stringy}.
 Namely, for any $X$ with log-terminal singularities
one introduces the stringy $E$-function $E(u,v)$ as a certain weighted sum over strata of a log resolution of $X$ and shows that it is independent of the resolution. In the case when $X$ is toroidal, the stringy $E$-function turns out to be a polynomial, which allows one to define stringy Hodge numbers 
$h^{p,q}_{st}(X)$ as coefficients of $E$. These numbers satisfy several nice properties.
If $X$ admits a crepant resolution $Y$, then $h_{st}^{p,q}(X)
=h^{p,q}(Y)$. If $X$ admits a crepant resolution by an orbifold (smooth DM stack) then $h^{p,q}(X)$ 
equals the dimension of the corresponding orbifold cohomology of $Y$. 
Perhaps the best justification for working with $h^{p,q}_{st}$ comes from the large class of mirror symmetry examples provided by hypersurfaces and complete intersections in Gorenstein toric Fano varieties. For these examples, the duality 
\begin{equation}\label{2}
h^{p,q}_{st}(X)=h^{n-p,q}_{st}(X^\dual)
\end{equation}
has been proved in \cite{BB}.

Once the appropriate stringy Hodge numbers are defined,
the next natural question to ask is what are the stringy cohomology \emph{vector spaces} whose
dimensions are given by  $h^{p,q}_{st}$. 
This is already a meaningful question in dimension three. While one can consider cohomology of 
a crepant resolution to be a sensible answer, the non-uniqueness of such resolution presents 
a problem. One expects to have not one stringy cohomology space but a family of such spaces
that interpolates between cohomology of different crepant (orbifold) resolutions, if any exist. While in general  the 
problem of constructing stringy cohomology
 is wide open, reasonable definitions can be made in the setting of toric mirror symmetry.
Specifically, Definition \ref{firstBM} below was proposed in \cite{BM}. 
  To explain the ingredients of this definition we first recall the combinatorial data
  of toric mirror symmetry.

Let $M$ and $N$ be dual free abelian groups (often referred to as lattices). Let $K$ and $K^\dual$
 be dual maximum-dimensional rational polyhedral cones in $M$ and $N$. These dual cones are called
 reflexive Gorenstein if the lattice 
 generators of the one-dimensional faces of both $K$ and $K^\dual$ lie in a
 hyperplane  at distance one from the origin in $M$ and $N$ respectively. This condition on the cones $K$ and $K^\dual$ is equivalent to the statement that both semigroup rings $\CC[K]$ and $\CC[K^\dual]$ are Gorenstein. 
 
Dual reflexive Gorenstein cones $K$ and $K^\dual$ uniquely determine a pair of lattice points
$\deg\in M$ and $\deg^\dual\in N$ which define the aforementioned hyperplanes by 
$\{\bullet\cdot \deg^\dual =1\}$ and $\{\deg\cdot\bullet=1\}$. 
We denote by $\Delta$ (respectively $\Delta^\dual)$ the set of lattice points in $M$ (respectively $N$)
that lie in these hyperplanes. 
For generic choices of coefficient functions $f:\Delta\to \CC$ and $g:\Delta^\dual\to \CC$,  one can
typically\footnote{There 
are some obstructions to this in $\deg\cdot\deg^\dual>1$ case, see \cite{BatNill}. However,
stringy cohomology vector spaces can still be constructed in the absence of Calabi-Yau varieties.
This suggests that the underlying $N=(2,2)$ superconformal field theories can also be 
constructed.} define mirror families of singular Calabi-Yau varieties $(X_f)$ and $(X_g^\dual)$. 

Definition \ref{firstBM} of stringy cohomology vector spaces in the setting of dual Gorenstein cones 
has been first suggested by Mavlyutov. Their dimensions have been consequently calculated in \cite{BM}
to coincide with the stringy Hodge numbers of $X_f$ and $X_g^\dual$ whenever they exist.
Here is the description of the construction. For a face $\theta$ of $K$ we consider the ideal $I_{f,\theta}$ in the semigroup ring
$\CC[\theta]$ generated by
the elements $\sum_{m\in\theta} f(m)\mu(m)[m]$ for all linear functions $\mu$ on the span of $\theta$. 
Here the summation is taken over elements in $\Delta$ that lie in $\theta$.
Then the  
space $R_1(f,\theta)$ is defined as the image of 
$$
\CC[\theta^{\circ}]/ I_{f,\theta} \CC[\theta^{\circ}] \to \CC[\theta]/I_{f,\theta} \CC[\theta]
$$
(see \cite{BM}) where $\theta^\circ$ is the interior of $\theta$ and thus $\CC[\theta^\circ]$
is an ideal in $\CC[\theta]$. To a face $\theta$ of $K$ we associate a face $\theta^* :={\rm Ann}(\theta)\cap K^\dual$ of
$K^\dual$ (we use $\theta^*$ to avoid the confusion with the dual cone $\theta^\dual$) and  define $R_1(g,\theta^*)$ similarly.

\begin{definition}\label{firstBM}
Stringy cohomology space of $(X_f,X_g^\dual)$ is given by
$$
\bigoplus_{ \{0\}\subseteq \theta \subseteq K} R_1(f,\theta)\otimes R_1(g,\theta^*) \otimes \Lambda^{\dim \theta^*} (\CC\theta^*).
$$
\end{definition}

\medskip
The space above possesses a natural double grading, see \cite{BM}\footnote{For exposition reasons, 
only the case of $\deg\cdot\deg^\dual=1$ case was considered in \cite{BM} but the arguments apply to any pair
of reflexive Gorenstein cones.}
such that the stringy Hodge numbers can be recovered as dimensions of double-graded components.  While it is satisfying to have natural spaces with the appropriate double
grading, Definition \ref{firstBM} raises two important questions.

\begin{itemize}
\item One expects to have two different  graded super-commutative associative products
on the stringy cohomology vector spaces. How can one construct these products?

\item How to  relate these spaces to the GKZ hypergeometric system of partial differential equations?

\end{itemize}

The first issue has been partially resolved in \cite{BM} and \cite{chiralrings} by identifying 
the spaces of Definition \ref{firstBM} with the chiral rings of certain $N=2$ vertex algebras under an additional technical assumption 
that $f$ and $g$ are  \emph{strongly} nondegenerate, which is a somewhat smaller open set than 
that of nondegenerate coefficient functions.  This interpretation of the stringy cohomology vector space as a chiral ring hinges on the following construction of this 
space  as cohomology of a natural complex of vector spaces.
Consider the quotient
$\CC[(K\oplus K^\dual)_0]$ of the semigroup ring $\CC[K\oplus K^\dual]$ by the ideal spanned by  $[m,n],
m\cdot n >0$. Define 
$$V=\CC[(K\oplus K^\dual)_0]\otimes \Lambda^* N_\CC$$
and an endomorphism $d_{f,g}:V\to V$ by 
$$
d_{f,g}=\sum_{m\in \Delta} f(m) [m] \otimes ({\rm contr.} m) + \sum_{n\in\Delta^\dual} g(n)[n]\otimes (n\wedge).
$$
It is easy to see that $d_{f,g}^2=0$. Moreover, $d$ increases by  one the natural grading on $V$ given by 
$[m,n]\otimes P\to m\cdot \deg^\dual +\deg\cdot\, n$.  The following result was first claimed in \cite{BM} and then used in \cite{chiralrings} to identify the space of Definition \ref{firstBM} with
the chiral ring of a vertex algebra.

\medskip\noindent
{\bf Theorem \ref{main}.}
The cohomology of $V$ with respect to $d_{f,g}$ is naturally isomorphic to the direct sum over faces $\theta$ of $K$
$$
\bigoplus_{ \{0\}\subseteq \theta \subseteq K} R_1(f,\theta)\otimes R_1(g,\theta^*) \otimes \Lambda^{\dim \theta^*} (\CC\theta^*).
$$
\smallskip

\noindent
Unfortunately, the proof of Theorem \ref{main}
presented in \cite{BM} was incorrect, and the correction is given in this paper
as a consequence of a result in intersection cohomology 
of polyhedral fans. 

The second issue with Definition \ref{firstBM}, namely the relation to the GKZ hypergeometric system
has eluded understanding for too long now. Finally, we are able to resolve it satisfactorily by modifying 
the complex of \cite{BM} slightly. Specifically, the main results of this paper are the following 
Definition 
\ref{defmain}, Theorem \ref{mainGKZ} and Corollary \ref{maincoro5}.

\medskip\noindent
{\bf Definition \ref{defmain}.}
Let $K$ and $K^\dual$ be dual reflexive Gorenstein cones. Define as before the space 
$V=\CC[(K\oplus K^\dual)_0]\otimes \Lambda^* N_\CC$. Consider the 
differential $\widehat d_{f,g}$ on it given by 
$$
\widehat d_{f,g} ([m_1\oplus n_1]\otimes P) = d_{f,g}([m_1\oplus n_1]\otimes P)+ [m_1\oplus n_1]\otimes (n_1\wedge P)
$$
$$
=\sum_{m\in \Delta} f(m) [(m+m_1)\oplus n_1] \otimes ({\rm contr.} m)(P) \hskip 100pt
$$$$+ \sum_{n\in\Delta^\dual} g(n)
[m_1\oplus (n+n_1)]\otimes (n\wedge P)
+[m_1\oplus n_1]\otimes (n_1\wedge P)
.
$$
Then we define stringy cohomology $B$-space $H_{B}(X_f,X_g^\dual)$ as the cohomology
of $V$ with respect to $\widehat d_{f,g}$, with the grading given by 
$$
[m\oplus n]\otimes P \mapsto 2 \,m\cdot \deg^\dual + \deg(P).
$$
We will also call this the $A$-space of the mirror pair $(X_g^\dual,X_f)$. Similarly, 
 $H_A(X_f,X_g^\dual)$ (equal to  $H_B(X_g^\dual,X_f)$) is defined as the cohomology of 
 $\CC[(K\oplus K^\dual)_0]\otimes \Lambda^*M_\CC$ by the differential that maps
$$
[m_1\oplus n_1]\otimes P \mapsto
\sum_{m\in \Delta} f(m) [(m+m_1)\oplus n_1] \otimes (m\wedge P)
$$$$+ \sum_{n\in\Delta^\dual} g(n)
[m_1\oplus (n+n_1)]\otimes ({\rm contr.}n)(P)
+[m_1\oplus n_1]\otimes (m_1\wedge P).
$$

\medskip\noindent
{\bf Theorem \ref{mainGKZ}.}
For nondegenerate $f$ and $g$,
the  $B$-space $H_B(X_f,X_g^\dual)$  is naturally isomorphic to
$$
\bigoplus_{ \{0\}\subseteq \theta \subseteq K} R_1(f,\theta)\otimes \widehat {R_1(g,\theta^*)} \otimes \Lambda^{\dim \theta^*} (\CC\theta^*).
$$
The bundle of $H_B(X_f,X_g^\dual)$  over 
the space of nondegenerate $g$ has a natural flat connection.

\medskip\noindent
{\bf Corollary \ref{maincoro5}.}
For strongly nondegenerate $f$ and $g$ the stringy cohomology spaces 
$$H_B(X_f,X_g^\dual)=H_A(X_g^\dual,X_f){\rm~
and~}H_A(X_f,X_g^\dual)=H_B(X_g^\dual,X_f)$$ are equipped with natural
structures of graded associative super-com-mutative algebras.

\bigskip

The flat connection in Theorem \ref{mainGKZ} is in fact explicitly seen to be related to
the GKZ hypergeometric system, or rather its better-behaved version considered in
\cite{BH}. Corollary \ref{maincoro5} is obtained by vertex algebra techniques. It requires 
a technical \emph{strong} non-degeneracy condition developed in \cite{chiralrings}.

We also remark that our notation has $X_f$ and $X_g^\dual$ in it, even though
the varieties $X_f$ and $X_g^\dual$ may not 
exist or be uniquely defined. One might in fact use notation
$H_{A/B}(K,K^\dual,f,g)$ instead. However, we prefer the current notation for the 
following reason. If $\deg\cdot\deg^\dual=1$, then we may interpret 
$X_f$ and $X_g^\dual$ as singular Calabi-Yau hypersurfaces in Gorenstein Fano
toric varieties ${\rm Proj}(\CC[K])$ and ${\rm Proj}(\CC[K^\dual])$ respectively. 
In $\deg\cdot\deg^\dual>1$ case we prefer to keep this notation to stress the relation to 
usual cohomology when it is applicable.

The paper is organized as follows. In Section \ref{inter} we prove a result on intersection 
cohomology of polyhedral fans which is key to the argument and may be of independent interest.
In Section \ref{correction} we prove Theorem \ref{main} as an easy consequence of 
the results of Section \ref{inter}. 
In Section \ref{sec4} we modify our complex so that the new version of stringy 
cohomology comes equipped 
with a flat connection. The discussion culminates in Theorem \ref{mainGKZ}. The first four sections are not particularly technical. Section \ref{voa}  is devoted to the construction of the product structures on the stringy cohomology defined in Section \ref{sec4}. We rely heavily on the results
of \cite{borvert} and \cite{chiralrings}. Finally, in Section \ref{sec6} we outline some open problems related to the construction.

{\bf Acknowledgements.} Joint work with Paul Horja \cite{BH} was one of the key motivations behind this paper. The author thanks Nick Addington for useful comments. The author has been  partially supported by the NSF grants DMS-1003445
and DMS-1201466.

\section{Intersection cohomology of dual cones}\label{inter}
In this section we prove a technical result on intersection cohomology of dual polyhedral 
cones which is the cornerstone of the paper.  We expect the reader to be familiar with the approach 
of \cite{BresslerLunts} and \cite{BBFK} to the intersection cohomology of fans. In these papers the authors 
define first a combinatorial analog of equivariant intersection cohomology of toric varieties as global sections 
of a minimum locally free (in the appropriate sense) and flabby sheaf on the topological space defined by the 
fan. Intersection cohomology is then defined as the quotient of the equivariant intersection 
cohomology by the action of linear functions.

Consider dual strictly convex polyhedral cones $C$ and $C^\dual$ in dual vector spaces $M_\RR$ and $N_\RR$ of dimension 
$r$. We do not assume that $C$ and $C^\dual$ are rational, although this is the case for our primary applications. 
Consider the finite polyhedral fan $\Phi$ in $M_\RR\oplus N_\RR$ 
whose cones are direct sums of faces $\theta\oplus\sigma$ with $\theta\subseteq C, \sigma\subseteq C^\dual$ and 
$\theta\cdot \sigma=0$. Consider the minimum flabby graded locally free sheaf ${\mathcal L}={\mathcal L}_{\{0\}\oplus \{0\}}$
(in the sense of \cite{BresslerLunts}) on $\Phi$. Let $W=\Gamma(\Phi,{\mathcal L})$ be the space of global sections of ${\mathcal L}$.

Note that $W$ is a module over ${\rm Sym}^*(M_\CC\oplus N_\CC)$ which is annihilated by the quadratic equation that gives the 
bilinear pairing. Consider dual bases $(m_i)$ and $(n_i)$ of $M_\RR$ and $N_\RR$
and the corresponding linear functions $y_i$ and $x_i$ on $N_\RR$ and $M_\RR$ respectively. Then $W$ is a module over 
the ring $\CC[x_1,\ldots,x_r,y_1,\ldots, y_r]$ annihilated by $\sum_{i=1}^r x_iy_i$.
We define the differential on $W\otimes \Lambda^*N_\CC$  by 
$$
d=\sum_{i=1}^r x_i\otimes ({\rm contr.} m_i) + \sum_{i=1}^r y_i \otimes (n_i \wedge).
$$
Clearly, $d^2=0$ because $W$ is annihilated by $\sum_{i=1}^r x_iy_i$. In addition, $d$ increases the  degree  by $1$. 
It is also easy to see that $d$ is in fact independent of the choice of dual bases in $M_\RR$ and $N_\RR$.
The main result of this section is the following.
\begin{theorem}\label{key}
Cohomology of $W\otimes \Lambda^*N_\CC$ with respect to $d$ is $0$ for any $C$ in dimension $r>0$.
\end{theorem}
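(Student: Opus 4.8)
The plan is to build an explicit contracting homotopy out of the $\CC[x_1,\dots,x_r,y_1,\dots,y_r]$-module structure on $W$, by exploiting the fact that $W$ is generated in degrees that are bounded below and that the differential $d$ carries positive degree. First I would introduce the "co-differential" $\delta = \sum_i m_i \otimes (y_i\,\cdot) + \sum_i n_i\wedge$-adjoint terms; more precisely, let $e = \sum_{i=1}^r x_i \otimes (n_i\wedge)$ and $h = \sum_{i=1}^r y_i \otimes ({\rm contr.}\,m_i)$ acting on $W\otimes\Lambda^*N_\CC$, so that $d = h^{\mathrm{op}}+e$ in a schematic sense; the honest candidate for a homotopy is the Koszul-type operator $s$ built from multiplication by the $m_i$ in $W$ paired against contraction/wedging in $\Lambda^*N_\CC$. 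The standard computation for such Koszul complexes gives $ds + sd = $ (a diagonalizable operator whose eigenvalues are the total "weight" $\sum x_i\partial_{x_i} + \sum y_i\partial_{y_i}$ plus a form-degree shift), and the key point is that on $W$ this operator is, after accounting for the single quadratic relation $\sum x_i y_i=0$, invertible away from the zero-weight part — so the only possible cohomology sits in the lowest weight.

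The second and genuinely substantive step is to pin down what happens in that bottom weight. Here I would use the structure theory of the minimal flabby sheaf $\mathcal L$ on $\Phi$ from \cite{BresslerLunts} and \cite{BBFK}: the stalk of $\mathcal L$ at the cone $\{0\}\oplus\{0\}$ is one-dimensional (it is the "$\mathrm{IH}$ of a point"), and the module $W=\Gamma(\Phi,\mathcal L)$ is generated over the structure ring by its sections supported near the origin, with the grading strictly positive on the maximal ideal. The claim is that the weight-zero part of $W\otimes\Lambda^*N_\CC$, together with the induced differential, forms the Koszul complex of the regular sequence "multiplication by $x_1,\dots,x_r$ on $\CC$" — which is exact in positive rank — or, dually, that it computes $\mathrm{Tor}$ against a module that is acyclic because $r>0$. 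Concretely I would filter $\Phi$ by the open star of the origin versus the rest, use flabbiness to get a short exact sequence of complexes, and reduce to the purely local model $W_0\otimes\Lambda^*N_\CC$ with $W_0$ the stalk at the vertex, where the computation is immediate.

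Assembling the two steps: the weight decomposition splits $W\otimes\Lambda^*N_\CC$ into the bottom-weight piece (handled by the local model in the second step) and the rest (killed by the homotopy $s$ in the first step), and in both pieces the cohomology vanishes once $r>0$, which is Theorem \ref{key}. I expect the main obstacle to be the second step rather than the first: the homotopy argument is a formal Koszul computation, but identifying the weight-zero subcomplex and proving it is exact requires genuinely using the minimality and flabbiness of $\mathcal L$ and the geometry of the fan $\Phi$ (in particular that $\Phi$ is not just any fan but the one cut out by the orthogonality condition $\theta\cdot\sigma=0$ between a cone and its dual, which forces $\mathcal L$ to have the expected small stalk at the origin). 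A secondary subtlety is bookkeeping the two gradings — the internal $\CC[x,y]$-grading and the exterior-algebra form degree — so that "weight zero" is unambiguous and the homotopy identity $ds+sd=(\text{weight})$ holds on the nose rather than up to lower-order terms; I would fix this by working one graded piece at a time from the start.
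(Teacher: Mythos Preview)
Your proposal has a genuine gap at the very first step: the homotopy $s$ is never actually constructed, and the candidates you gesture at do not make sense on $W$. You speak of ``multiplication by the $m_i$ in $W$'' and of a weight operator $\sum x_i\partial_{x_i}+\sum y_i\partial_{y_i}$, but $m_i\in M$ does not act on $W$, and there are no derivations $\partial_{x_i},\partial_{y_i}$ on $W$: the space $W=\Gamma(\Phi,\mathcal L)$ is a module over $\CC[{\bf x},{\bf y}]/\langle\sum x_iy_i\rangle$, not a polynomial ring, and for non-simplicial $C$ it is far from free. The ``standard computation'' you invoke is the Koszul homotopy for a free module with the usual Koszul differential; here the differential mixes a Koszul piece $\sum x_i\otimes\iota_{m_i}$ with a dual Koszul piece $\sum y_i\otimes(n_i\wedge)$, and there is no evident operator $s$ whose anticommutator with $d$ is the degree. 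Indeed, as the paper remarks, the theorem is \emph{equivalent} to the statement that $W$ has finite projective dimension over $\CC[{\bf x},{\bf y}]/\langle\sum x_iy_i\rangle$; an explicit contracting homotopy would amount to exhibiting a very short free resolution of $W$, which is not available by any formal argument.

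Your second step is also not right as stated. The differential $d$ raises total polynomial degree by one, so it does not preserve any ``weight-zero'' subspace; if one instead argues that a putative identity $ds+sd=(\text{degree})$ forces cohomology to be represented by degree-zero classes, one must then show that every degree-zero element $1\otimes P\in W_0\otimes\Lambda^*N_\CC$ which is a cycle is already a boundary, and your description of this as ``the Koszul complex of a regular sequence on $\CC$'' is not accurate (the map $1\otimes P\mapsto\sum_i x_i\otimes\iota_{m_i}P+\sum_i y_i\otimes n_i\wedge P$ lands in $W_1\otimes\Lambda^*N_\CC$, whose structure depends on $C$). The paper's actual proof is entirely different and uses substantial input: one filters $W$ by $\dim\theta$ (respectively $\dim\sigma$), identifies the associated graded with $\bigoplus_\theta \mathrm{IH}(\theta,\partial\theta)\otimes\mathrm{IH}(\theta^*)\otimes\Lambda^{\dim\theta^*}(\CC\theta^*)$ (respectively its dual version), and then uses the degree bounds $\deg_x\geq\tfrac12\dim\theta$, $\deg_y\leq\tfrac12\dim\theta^*$ coming from the strong Lefschetz theorem of Karu to force the grading $\deg_x-\deg_y+\deg(\Lambda^*)$ on cohomology to be simultaneously $>\tfrac12 r$ and $<\tfrac12 r$. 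The appeal to hard Lefschetz is what replaces the missing homotopy, and there is no indication that a more elementary route exists.
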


\begin{remark}
For $r=0$, the space $W \otimes \Lambda^*N_\CC$ is one-dimensional with $d=0$.
\end{remark}

\begin{remark}
Theorem \ref{key} is equivalent to the statement that $W$ has finite projective dimension
over the ring $\CC[{\bf x},{\bf y}]/\langle \sum_{i=1}^rx_iy_i\rangle$. We thank Nick Addington for this observation.
It would be interesting to produce a free resolution of $W$ explicitly, but it is not directly related to the goals of this paper.
\end{remark}

\begin{proof}
We can construct ${\mathcal L}={\mathcal L}_{\{0\}\oplus \{0\}}$ as a restriction to $\Phi$
of product of pullbacks of 
${\mathcal L}_{\{0\}}$ sheaves on $K$ and $K^\dual$. As a consequence,
the space $W$ is double graded by degree in $x$ and degree in $y$. The differential $d$ 
preserves the grading by 
\begin{equation}\label{gr}
\deg_x-\deg_y+\deg(\Lambda^*)
\end{equation}
which consequently passes to cohomology. We will show that for $r>0$ this grading on the cohomology is 
at once $< \frac 12r$ and $>\frac 12 r$ to conclude that cohomology is $0$. To accomplish this, we estimate
the cohomology in two different ways by means of spectral sequences associated to two different filtrations of $W$.

Consider the filtration 
$$W=F^{-1}W \supseteq F^0W \supseteq F^1W \supseteq \ldots\supseteq F^rW=0$$
on $W$ defined by 
$$
F^kW = \{w\in W, {\rm~such~that}~w\vert_{\theta\oplus\sigma} = 0~{\rm for~all~}\theta~{\rm of}~\dim\theta \leq k\}
$$
and the corresponding filtration $(F^kW\otimes   \Lambda^*N_\CC)$ on $W\otimes \Lambda^*N_\CC$.
Clearly $(F^kW\otimes   \Lambda^*N_\CC)$ is preserved by $d$ and is compatible with the grading
by $\deg_x-\deg_y+\deg(\Lambda^*)$, in the sense that it induces filtration on each graded component.
We will now consider the spectral sequence associated to this filtration. 

Note that $F^{k}W$ is the kernel of the restriction map from the space of global sections of $\mathcal L$ on $\Phi$
to the space of its sections on the open set $\Phi_k$ (in the fan topology) described by $\dim \theta\leq k$. 
Since $\mathcal L$ is flabby, these restriction maps are surjective. Therefore, the quotient of $F^{k-1}W$ by $F^{k}W$ 
is the kernel of the restriction map $\Gamma(\Phi_{k},{\mathcal L})\to \Gamma(\Phi_{k-1},{\mathcal L})$. 
Note that $\mathcal L$ is built inductively by expanding the open sets. The fan $\Phi_k$ is obtained from the fan
$\Phi_{k-1}$ by attaching cells $\theta\oplus \theta^*$ with $\dim \theta =k$ to the boundary $\partial\theta\oplus\theta^*$.
We thus have
$$
F^{k-1}W/F^{k}W=\bigoplus_{\dim\theta=k} \Gamma(\theta,\partial\theta,{\mathcal L} )
\otimes \Gamma(\theta^*,\mathcal L).
$$
where $\Gamma(\theta,\partial\theta,{\mathcal L} )$ is the kernel of the map
$\Gamma(\theta,{\mathcal L})\to \Gamma(\partial\theta,{\mathcal L})$.
The associated graded complex is then given by 
$$
\bigoplus_\theta  \Gamma(\theta,\partial\theta,{\mathcal L} )
\otimes \Gamma(\theta^*,\mathcal L) \otimes \Lambda^* N_\CC.
$$
For each direct summand we decompose 
\begin{equation}\label{split}
\Lambda^*N_\CC =
\Lambda^* ((\CC\theta)^\dual)\otimes \Lambda^* (\CC\theta^*)
\end{equation}
and see that the $\theta$ component of the associated graded 
complex in question is isomorphic to the tensor
product of two Koszul  complexes. Its cohomology is given by 
$$
\bigoplus_\theta {\rm IH}(\theta,\partial\theta)\otimes {\rm IH}(\theta^*) \otimes \Lambda^{\dim \theta^*} (\CC\theta^*).
$$
where the intersection cohomology spaces above are defined in \cite{BresslerLunts}
as quotients of $\Gamma(\theta,\partial\theta,{\mathcal L} )$
and $\Gamma(\theta^*,\mathcal L)$ by the ideals of linear functions.

We will now use the inequalities on the grading of ${\rm IH}(\theta,\partial\theta)$ and ${\rm IH}(\theta)$ which follow
from the strong Lefschetz theorem \cite{Karu}. Specifically, the graded dimensions of these spaces can be given in
terms of the $G$-polynomials of Stanley  \cite{Stanley}. We see that $\deg_x \geq \frac 12 \dim\theta = \frac 12 k$, 
$\deg_y\leq \frac 12\dim\theta^*=\frac 12(r-k)$. So the degree in the sense of \eqref{gr} is at least
$$
\frac 12 k - \frac 12(r-k) + (r-k) = \frac 12 r.
$$
Note that if $r>0$ then at least one of $\theta$,  $\theta^*$ is of positive dimension, so the corresponding inequality 
is strict. Thus, we get that the degree is $>\frac 12 r$.

By considering the analogous filtration by dimension of $\sigma$, we see that there is a spectral sequence that converges
to the cohomology of $W\otimes \Lambda^*N_\CC$ and starts from
$$
\bigoplus_\theta {\rm IH}(\theta)\otimes {\rm IH}(\theta^*,\partial\theta^*) \otimes \Lambda^{\dim \theta^*} (\CC\theta^*).
$$
The corresponding inequalities $\deg_x\leq \frac 12\dim\theta$ and $\deg_y\geq \frac 12 \dim\theta^*$ (as before, 
at least one of them is sharp) lead to the grading \eqref{gr} being less than $\frac 12r$. This finishes the proof.
\end{proof}

One of the main results of \cite{BresslerLunts} 
is that every locally free flabby sheaf on $\Phi$ is (non-canonically)
isomorphic to a direct sum of copies of minimal locally flabby sheaves that originate at various cones of $\Phi$. Theorem \ref{key} 
is a statement about global sections of such minimal sheaf that originates at the zero cone. In the following proposition we extend 
this result to other minimal flabby locally free sheaves on $\Phi$.
\begin{proposition}\label{maincoro}
Let $C$ and $\Phi$ be as in Theorem \ref{key}.
Fix  $(\theta_0,\sigma_0)$ with $\sigma_0\subseteq \theta_0^*$ and consider the 
minimal flabby locally free sheaf of modules ${\mathcal L}_{(\theta_0\oplus\sigma_0)}$.
If $\sigma_0\subsetneq \theta_0^*$ then the cohomology 
of 
$$
d=\sum_{i=1}^r x_i\otimes ({\rm contr.} m_i) + \sum_{i=1}^r y_i \otimes (n_i \wedge)
$$
on $\Gamma(\Phi, {\mathcal L}_{(\theta_0\oplus\sigma_0)})\otimes \Lambda^*N_\CC$ is zero.
If $\sigma_0= \theta_0^*$ then the cohomology is one-dimensional and can be identified with
$\Lambda^{\dim \theta_0^*} (\CC\theta_0^*)$ as a graded vector space.
\end{proposition}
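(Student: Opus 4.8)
The plan is to mimic the double-spectral-sequence argument of Theorem \ref{key}, but carried out for the sheaf ${\mathcal L}_{(\theta_0\oplus\sigma_0)}$ rather than ${\mathcal L}_{\{0\}\oplus\{0\}}$, and then track carefully how the shift in the originating cone alters the grading estimates. First I would recall from \cite{BresslerLunts} that ${\mathcal L}_{(\theta_0\oplus\sigma_0)}$ can be realized as a product of pullbacks of minimal flabby sheaves ${\mathcal L}_{\theta_0}$ on $C$ and ${\mathcal L}_{\sigma_0}$ on $C^\dual$, restricted to $\Phi$; this again gives $W_{(\theta_0,\sigma_0)}:=\Gamma(\Phi,{\mathcal L}_{(\theta_0\oplus\sigma_0)})$ a double grading by $\deg_x$ and $\deg_y$, and $d$ still preserves $\deg_x-\deg_y+\deg(\Lambda^*)$. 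The two filtrations (by $\dim\theta$ and by $\dim\sigma$) are set up exactly as before, and flabbiness of ${\mathcal L}_{(\theta_0\oplus\sigma_0)}$ again makes the restriction maps surjective, so the associated graded of the first filtration is
$$
\bigoplus_{\theta}\Gamma(\theta,\partial\theta,{\mathcal L}_{(\theta_0\oplus\sigma_0)})\otimes\Gamma(\theta^*,{\mathcal L}_{(\theta_0\oplus\sigma_0)})\otimes\Lambda^*N_\CC,
$$
and after the splitting \eqref{split} each $\theta$-summand is a tensor product of two Koszul-type complexes whose cohomology involves the relative and absolute local intersection cohomology of the stalks of ${\mathcal L}_{(\theta_0\oplus\sigma_0)}$.

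The next step is to pin down the grading bounds on these stalk cohomologies. For $\theta\not\supseteq\theta_0$ the stalk of the pullback of ${\mathcal L}_{\theta_0}$ at $\theta$ vanishes, so only cones $\theta\supseteq\theta_0$ contribute; similarly on the dual side only $\theta^*\supseteq\sigma_0$ contribute, i.e. $\sigma_0\subseteq\theta^*$, equivalently $\theta\subseteq\theta_0^{**}=\theta_0$ — wait, more carefully, the condition from the fan $\Phi$ combined with the support conditions forces $\theta_0\subseteq\theta$ and $\sigma_0\subseteq\theta^*$. On such $\theta$, the relative stalk cohomology ${\rm IH}(\theta,\partial\theta;{\mathcal L}_{\theta_0})$ is concentrated (by strong Lefschetz, \cite{Karu}, \cite{Stanley}) in degrees $\geq\frac12(\dim\theta-\dim\theta_0)+\dim\theta_0=\frac12(\dim\theta+\dim\theta_0)$, while the absolute stalk cohomology ${\rm IH}(\theta^*;{\mathcal L}_{\sigma_0})$ sits in degrees $\leq\frac12(\dim\theta^*-\dim\sigma_0)+\dim\sigma_0=\frac12(\dim\theta^*+\dim\sigma_0)$. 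Feeding this into $\deg_x-\deg_y+\deg(\Lambda^*)$, with $\deg(\Lambda^*)$ contributing $\dim\theta^*$ on the surviving part, gives a lower bound of
$$
\tfrac12(\dim\theta+\dim\theta_0)-\tfrac12(\dim\theta^*+\dim\sigma_0)+\dim\theta^* = \tfrac12 r + \tfrac12(\dim\theta_0-\dim\sigma_0).
$$
The symmetric filtration by $\dim\sigma$ gives the matching upper bound $\frac12 r-\frac12(\dim\theta_0-\dim\sigma_0)$ — here one must be careful that the relevant shift is again $\dim\theta_0-\dim\sigma_0$ and not its negative, which is where orienting the two estimates consistently is the delicate bookkeeping. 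When $\sigma_0\subsetneq\theta_0^*$ we have $\dim\sigma_0<\dim\theta_0^*=r-\dim\theta_0$, hence $\dim\theta_0-\dim\sigma_0>2\dim\theta_0-r$; this alone is not quite the inequality I want, so the real point is that whenever one of the extremal degree bounds is attained with equality, the corresponding cone is forced to be minimal ($\theta=\theta_0$ on one side, $\theta^*=\sigma_0$ on the other), and these cannot happen simultaneously unless $\sigma_0=\theta_0^*$. Thus for $\sigma_0\subsetneq\theta_0^*$ both bounds are strict in a way that makes them incompatible, forcing the cohomology to vanish.

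For the boundary case $\sigma_0=\theta_0^*$: here $\theta_0^{**}$-type reasoning shows the only cone contributing to both spectral sequences at the extreme degree is $\theta=\theta_0$, $\theta^*=\theta_0^*=\sigma_0$, on which $\Gamma(\theta_0,\partial\theta_0,{\mathcal L}_{\theta_0})$ and $\Gamma(\theta_0^*,{\mathcal L}_{\sigma_0})$ are each the one-dimensional generating stalk, so the $\theta_0$-summand of the associated graded is $\Lambda^{\dim\theta_0^*}(\CC\theta_0^*)$ in a single degree with zero differential, while all other summands still obey strict inequalities and cancel. One then checks that the spectral sequence degenerates on this surviving line — both differentials in and out of it vanish for degree reasons — giving a one-dimensional cohomology canonically $\Lambda^{\dim\theta_0^*}(\CC\theta_0^*)$. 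I expect the main obstacle to be exactly the careful extraction of the stalk-degree inequalities for ${\mathcal L}_{\theta_0}$ and the verification that the two spectral-sequence bounds are oriented so as to be genuinely incompatible when $\sigma_0\subsetneq\theta_0^*$; the Koszul-complex and flabbiness machinery is essentially identical to Theorem \ref{key} and should go through routinely.
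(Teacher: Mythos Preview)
Your overall strategy is the paper's own: two filtrations (by $\dim\theta$ and by $\dim\sigma$), identify the $E_1$ pages via the Koszul splitting \eqref{split}, then squeeze the grading $\deg_x-\deg_y+\deg(\Lambda^*)$ between a lower and an upper bound using the hard Lefschetz inequalities. The difficulty you run into is not conceptual but computational, and it hides the fact that the argument actually closes cleanly.

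First, the shift you insert is spurious. In the Bressler--Lunts normalization the stalk of ${\mathcal L}_{\theta_0}$ at $\theta\supseteq\theta_0$ is the pullback of ${\mathcal L}_{\{0\}}$ from the quotient fan, so after killing linear functions one obtains ${\rm IH}(\theta/\theta_0,\partial(\theta/\theta_0))$ with $\deg_x\geq\tfrac12(\dim\theta-\dim\theta_0)$ and, dually, ${\rm IH}(\theta^*/\sigma_0)$ with $\deg_y\leq\tfrac12(\dim\theta^*-\dim\sigma_0)$; there is no additional $+\dim\theta_0$ or $+\dim\sigma_0$. Second, and this is what actually derails you, the upper bound from the second filtration is not $\tfrac12 r-\tfrac12(\dim\theta_0-\dim\sigma_0)$: if you redo that estimate with the same conventions as your lower bound you will find it equals the \emph{same} number as the lower bound, namely
\[
\tfrac12(\dim\theta-\dim\theta_0)-\tfrac12(\dim\theta^*-\dim\sigma_0)+\dim\theta^*=\tfrac12 r+\tfrac12(\dim\sigma_0-\dim\theta_0).
\]
So the two spectral sequences pin the grading to a single value. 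The vanishing for $\sigma_0\subsetneq\theta_0^*$ then comes exactly from the observation you already made: for every $\theta$ with $\theta_0\subseteq\theta$ and $\sigma_0\subseteq\theta^*$, at least one of these inclusions is proper (since $\theta=\theta_0$ forces $\theta^*=\theta_0^*\supsetneq\sigma_0$), so the corresponding Lefschetz inequality is strict on each $E_1$ summand, making the lower bound strictly greater and the upper bound strictly smaller than the common value. That is the paper's argument verbatim. For $\sigma_0=\theta_0^*$ only $\theta=\theta_0$ survives and the identification with $\Lambda^{\dim\theta_0^*}(\CC\theta_0^*)$ is as you describe.
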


\begin{proof}
We repeat the same argument as in the proof of Theorem \ref{key}, with the same filtrations 
as before. We have two spectral sequences which converge to the cohomology and start from
$$
\bigoplus_{\theta,\theta_0\subseteq \theta\subseteq \sigma_0^*} 
IH(\theta/\theta_0,\partial\theta/\theta_0)\otimes IH(\theta^*/\sigma_0)\otimes \Lambda^{\dim \theta^*}
(\CC\theta^*)
$$
and
$$
\bigoplus_{\theta,\theta_0\subseteq \theta\subseteq \sigma^*} 
IH(\theta/\theta_0)\otimes IH(\theta^*/\sigma_0,\partial\theta^*/\sigma_0)\otimes \Lambda^{\dim \theta^*}
(\CC\theta^*)
$$
where taking the quotient means considering the image of the appropriate cones modulo the span
of a face.
The grading is estimated to be at least  
$$\frac 12 (\dim \theta-\dim\theta_0) - \frac 12(\dim \theta^*-\dim\sigma_0) + \dim\theta^*
=\frac 12 r +\frac 12(\dim\theta_0+\dim\sigma_0)$$
from one sequence and to be at most the same quantity by the other. If $\sigma_0\subsetneq \theta_0^*$, then the inequalities are 
strict, because at least one of the inclusions $\theta_0\subseteq \theta$ and 
$\sigma_0\subseteq \theta^*$ is proper, which proves the claim.

If $\sigma_0= \theta_0^*$, then only $\theta=\theta_0$ term appears. It remains to observe
that while the tensor product description of \eqref{split} is not canonical, the embedding 
$\CC\theta_0^*\subseteq N_\CC$ is. Thus the one-dimensional space 
$\Lambda^{\dim \theta_0^*} (\CC\theta_0^*)$ is a natural subspace of $\Lambda^*N_CC$. 
Also, the inherent ambiguity
in construction of ${\mathcal L}_{(\theta_0\oplus\sigma_0)}$ does not occur at 
$(\theta_0\oplus\sigma_0)$ itself, where the sections are canonically isomorphic to $\CC$.
\end{proof}

\section{Double Koszul complexes for dual reflexive Gorenstein cones}\label{correction}
In this section we use Theorem \ref{key} and Proposition \ref{maincoro} to prove a result on cohomology 
of a certain complex built from a pair of dual reflexive Gorenstein cones. This fills in
a gap in the proof in \cite{BM}. The reader should be warned however, that this is ultimately
\emph{not} the right complex to consider.  In the next section we will modify the complex somewhat
so that the cohomology  admits a flat connection.

Let $K$ and $K^\dual$ be dual reflexive Gorenstein cones in lattices $M$ and $N$ as in Section \ref{intro},
 $\deg$ and $\deg^\dual$  their degree elements, $\Delta$ and $\Delta^\dual$ the sets of lattice elements of 
degree one and $f$ and $g$  nondegenerate coefficient functions.
As in Section \ref{intro}, we consider the quotient
$\CC[(K\oplus K^\dual)_0]$ of the ring $\CC[K\oplus K^\dual]$ by the ideal spanned by  $[m,n],
m\cdot n >0$. Define 
$$V=\CC[(K\oplus K^\dual)_0]\otimes \Lambda^* N_\CC$$
and an endomorphism $d:V\to V$ by 
\begin{equation}\label{d}
d_{f,g}=\sum_{m\in \Delta} f(m) [m] \otimes ({\rm contr.} m) + \sum_{n\in\Delta^\dual} g(n)[n]\otimes (n\wedge).
\end{equation}
It is easy to see that $d_{f,g}^2=0$. Moreover, $d_{f,g}$ increases by  one the natural grading on $V$ given by 
$[m,n]\otimes P\mapsto m\cdot \deg^\dual +\deg\cdot \,n$.  An explicit description of the  cohomology
of $V$ with respect to differential $d_{f,g}$ 
has been claimed in \cite{BM}, but the proof presented there was incorrect.
Recall that for a face of $\theta\subseteq K$ one can define $R_1(f,\theta)$ as
the image of
$$
\CC[\theta^{\circ}]/ I_{f,\theta} \CC[\theta^{\circ}] \to \CC[\theta]/I_{f,\theta} \CC[\theta]
$$
where $I_{f,\theta}$ is the ideal generated by the logarithmic derivatives of  $\theta$,
see Section \ref{intro}. Similarly, one defines $R_1(g,\theta^*)$ for the dual face 
$\theta^*={\rm Ann}(\theta)\cap K^\dual$.

\begin{theorem}\label{main}
The cohomology of $V$ with respect to $d_{f,g}$ is naturally isomorphic to the direct sum over faces $\theta$ of $K$
$$
\bigoplus_{ \{0\}\subseteq \theta \subseteq K} R_1(f,\theta)\otimes R_1(g,\theta^*) \otimes \Lambda^{\dim \theta^*} (\CC\theta^*).
$$
\end{theorem}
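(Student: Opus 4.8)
The plan is to reduce the computation, via nondegeneracy of $f$ and $g$, to the intersection‑cohomology complexes of Proposition~\ref{maincoro}; this is the step botched in \cite{BM}.

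First I would record that $\CC[(K\oplus K^\dual)_0]$ splits into blocks indexed by the pair of faces $\theta\subseteq K$, $\sigma\subseteq K^\dual$ with $\theta\cdot\sigma=0$ whose relative interiors contain the two components of a monomial; since this is precisely the cone structure of the fan $\Phi$ attached to $K,K^\dual$ in Section~\ref{inter}, we have $\CC[(K\oplus K^\dual)_0]=\Gamma(\Phi,\mathcal A)$ for the ``structure sheaf'' $\mathcal A$ with $\mathcal A(\theta\oplus\sigma)=\CC[\theta]\otimes\CC[\sigma]$, and the two halves of $d_{f,g}$ act on the two tensor factors. I would run the $f$-part first: filtering $V$ by $\dim\theta$ (which $d_f$ weakly raises and $d_g$ preserves), the $E_0$-differential on the $\theta$-summand is the interior-preserving contraction $\sum_{m\in\Delta\cap\theta}f(m)[m]\otimes({\rm contr.}\,m)$ together with all of $d_g$. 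Using the splitting \eqref{split} and Künneth, each associated-graded piece becomes a Koszul homology in the $\Lambda^*((\CC\theta)^\dual)$ variables, tensored with the untouched $\CC[\theta^*]\otimes\Lambda^*(\CC\theta^*)$ carrying $d_g$. By the standard nondegeneracy input of \cite{BB}, the logarithmic derivatives of $f$ along $\theta$ form a regular sequence on the canonical module $\CC[\theta^{\circ}]$, so this Koszul homology lives in degree $0$ and equals $R_1(f,\theta)$. In sheaf language on the $K$-fan this says the ``$f$-Jacobian sheaf'' $\mathcal R^M\colon\theta\mapsto\CC[\theta]/I_{f,\theta}\CC[\theta]$ is flabby and locally free, and — again by \cite{BB}, by induction on $\dim\theta$ using $\dim\Gamma(\theta_0,{\mathcal L}^M_{\theta_0})=1$ — decomposes as $\mathcal R^M\cong\bigoplus_{\theta_0\subseteq K}R_1(f,\theta_0)\otimes{\mathcal L}^M_{\theta_0}$, with the ${\mathcal L}^M_{\theta_0}$ the minimal flabby sheaves of \cite{BresslerLunts}; the same applies to $g$ on $K^\dual$, giving $\mathcal R^N\cong\bigoplus_{\sigma_0\subseteq K^\dual}R_1(g,\sigma_0)\otimes{\mathcal L}^N_{\sigma_0}$.

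Putting the two halves together, the upshot is that for nondegenerate $f,g$ the complex $(V,d_{f,g})$ is quasi-isomorphic to $\Gamma\big(\Phi,(\mathcal R^M\boxtimes\mathcal R^N)|_\Phi\big)\otimes\Lambda^*N_\CC$ with the differential $d$ of Proposition~\ref{maincoro}, the $f$- and $g$-twists having been absorbed into the identification of the nondegenerate Jacobian rings with combinatorial intersection cohomology. Since the restriction to $\Phi$ of $({\mathcal L}^M_{\theta_0})\boxtimes({\mathcal L}^N_{\sigma_0})$ is the minimal flabby sheaf ${\mathcal L}_{(\theta_0\oplus\sigma_0)}$ on $\Phi$, nonzero exactly when $\theta_0\cdot\sigma_0=0$, i.e. $\sigma_0\subseteq\theta_0^*$, the decompositions of $\mathcal R^M,\mathcal R^N$ turn this complex into
$$
\bigoplus_{\substack{\theta_0\subseteq K,\ \sigma_0\subseteq K^\dual\\ \sigma_0\subseteq\theta_0^*}} R_1(f,\theta_0)\otimes R_1(g,\sigma_0)\otimes\big(\Gamma(\Phi,{\mathcal L}_{(\theta_0\oplus\sigma_0)})\otimes\Lambda^*N_\CC,\,d\big).
$$
Now Proposition~\ref{maincoro} evaluates each summand: the complex is acyclic when $\sigma_0\subsetneq\theta_0^*$, and when $\sigma_0=\theta_0^*$ its cohomology is the one-dimensional $\Lambda^{\dim\theta_0^*}(\CC\theta_0^*)$. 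Summing the surviving terms over $\theta:=\theta_0$ gives exactly $\bigoplus_{\{0\}\subseteq\theta\subseteq K}R_1(f,\theta)\otimes R_1(g,\theta^*)\otimes\Lambda^{\dim\theta^*}(\CC\theta^*)$, and in particular all higher differentials of the spectral sequence vanish.

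The main obstacle is the quasi-isomorphism in the previous paragraph: beyond the (essentially \cite{BB}) identification of the nondegenerate $f$- and $g$-Jacobian sheaves with the intersection-cohomology sheaves of \cite{BresslerLunts}, one must check that this identification is compatible with the cross-differential coming from the pairing $\sum x_iy_i$, so that the combined complex is $\Gamma(\Phi,(\mathcal R^M\boxtimes\mathcal R^N)|_\Phi)\otimes\Lambda^*N_\CC$ with precisely the differential $d$ of Section~\ref{inter}. Equivalently, one must know that after the off-diagonal $R_1(g,\sigma)$-contributions hidden inside $\CC[\theta^*]/I_{g,\theta^*}\CC[\theta^*]$ are cancelled against the larger faces, nothing else survives — a statement that does not follow from formal considerations and for which Proposition~\ref{maincoro}, resting on Theorem~\ref{key} and the strong Lefschetz theorem for polytopes, supplies the missing ingredient. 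The remaining steps — the block decompositions, the Künneth reductions, and the regular-sequence claims — are routine once this is in place.
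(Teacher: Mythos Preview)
Your strategy is the paper's strategy --- decompose the relevant sheaves via Bressler--Lunts into minimal flabby locally free pieces and then invoke Proposition~\ref{maincoro} --- but your execution takes a wrong turn at the intermediate object. The ``$f$-Jacobian sheaf'' $\mathcal R^M:\theta\mapsto\CC[\theta]/I_{f,\theta}\CC[\theta]$ is \emph{not} locally free in the sense of \cite{BresslerLunts}: once you quotient by $I_{f,\theta}$ the action of all linear functions (the $x_i$) is zero, so each stalk is a finite-dimensional $\CC$-vector space, not a free module over the polynomial ring on the cone. The decomposition $\mathcal R^M\cong\bigoplus R_1(f,\theta_0)\otimes\mathcal L^M_{\theta_0}$ is therefore not a statement in the Bressler--Lunts category, and on your purported complex $\Gamma(\Phi,(\mathcal R^M\boxtimes\mathcal R^N)|_\Phi)\otimes\Lambda^*N_\CC$ the differential $d=\sum x_i\otimes({\rm contr.}\,m_i)+\sum y_i\otimes(n_i\wedge)$ would vanish identically. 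So the ``quasi-isomorphism'' you flag as the main obstacle is not merely unproved, it is false as stated.

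The paper sidesteps this by never passing to the Jacobian quotient. It works with your sheaf $\mathcal A$ itself, i.e.\ with $\mathcal F_M:\theta\mapsto\CC[\theta]$ and $\mathcal F_N:\sigma\mapsto\CC[\sigma]$, made into modules over polynomial functions via the logarithmic-derivative map (so $x_i$ acts as multiplication by $\sum_{m}f(m)(m\cdot n_i)[m]$). Nondegeneracy makes $\mathcal F_M,\mathcal F_N$ locally free and flabby, hence $\mathcal F_M\cong\bigoplus_\theta H_\theta\otimes\mathcal L_\theta$ with $H_\theta\cong R_1(f,\theta)$. The point is that under this module structure $V=\Gamma(\Phi,\mathcal F_M\boxtimes\mathcal F_N)\otimes\Lambda^*N_\CC$ \emph{already is} the complex of Proposition~\ref{maincoro}: one checks directly that $d_{f,g}=\sum_i x_i\otimes({\rm contr.}\,m_i)+\sum_i y_i\otimes(n_i\wedge)$. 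No preliminary filtration by $\dim\theta$ or Koszul reduction is needed; Proposition~\ref{maincoro} applies on the nose to each $H_\theta\otimes H_\sigma\otimes\Gamma(\Phi,\mathcal L_\theta\times\mathcal L_\sigma)\otimes\Lambda^*N_\CC$ summand. Finally, the paper secures \emph{naturality} of the isomorphism --- which your outline does not address, since the Bressler--Lunts splitting is noncanonical --- via the sandwich of maps~\eqref{3} and a dimension count.
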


\begin{proof}
Consider the fan of facets of $K\oplus K^\dual$.
Consider the flabby locally free sheaf $\mathcal F$ on it given by $(\theta,\sigma)\to \CC[\theta\oplus\sigma]$.
It can be naturally given a structure of the sheaf of modules over the ring of polynomial functions
via
a ring homomorphism  ${\rm Sym}^*(M_\CC\oplus N_\CC) \to \CC[K\oplus K^\dual]$ defined on generators by
$$(a,b)\mapsto \sum_{m\in\Delta}(m\cdot b) f(m)[m\oplus 0]
+  \sum_{n\in\Delta^\dual}(a\cdot n) g(n)[0\oplus n].
$$
In other words, linear functions on $M$ (resp.  $N$) act as multiplications by the logarithmic partial derivatives
of $f$ (resp. $g$). This induces on  $\mathcal F$ the structure of the module over polynomial functions 
on the faces $\theta\oplus\sigma$ of $K\oplus K^\dual$. In fact, this sheaf is a product of pullbacks of two similarly
defined sheaves ${\mathcal F}_M$ and ${\mathcal F}_N$ on $K$ and $K^\dual$.

The key observation of \cite{BM} is that the non-degeneracy
of $f$ and $g$ implies that  $\mathcal F$ is locally free as a sheaf of modules over the rings of polynomials,
and similarly for ${\mathcal F}_M$ and ${\mathcal F}_N$.
In addition,  ${\mathcal F}_M$ and ${\mathcal F}_N$ are graded by degree. Thus, the machinery of \cite{BresslerLunts} 
applies. We can write (non-canonically) these sheaves as direct sums of copies of graded locally free modules
${\mathcal L}_{\theta}$ and ${\mathcal L}_{\sigma}$ which originate at $\theta$ and $\sigma$ respectively.
We have
\begin{equation}\label{decompose}
{\mathcal F}_M = \bigoplus_{0\subseteq \theta\subseteq K} H_\theta\otimes {\mathcal L}_\theta, \hskip 30pt
{\mathcal F}_N = \bigoplus_{0\subseteq \sigma\subseteq K^\dual} H_\sigma\otimes {\mathcal L}_\sigma
\end{equation}
where $H_\theta$ and $H_\sigma$ are finite-dimensional graded vector spaces.

The space $\CC[(K\oplus K^\dual)_0]$ is the space of global sections of $\mathcal F$ on the open set $\Phi$
that corresponds to $\bigcup_\theta( \theta,\theta^*)$ so by \eqref{decompose}
the space $V=\CC[(K\oplus K^\dual)_0]\otimes \Lambda^* N_\CC$ decomposes as
$$
\bigoplus_{\theta,\sigma} H_\theta\otimes H_\sigma \otimes \Gamma(\Phi, 
{\mathcal L}_\theta \times {\mathcal L}_\sigma)\otimes  \Lambda^* N_\CC
$$
The differential $d_{f,g}$ preserves the direct sum, and in fact acts on each $ \Gamma(\Phi, 
{\mathcal L}_\theta \times {\mathcal L}_\sigma)\otimes  \Lambda^* N_\CC$
as the differential of Proposition \ref{maincoro}. Indeed, for any 
pair of dual bases $(m_i)$ of $M$ and $(n_i)$ of $N$ 
we have 
$$
\sum_{m\in\Delta} f(m)[m]\otimes ({\rm contr.} m) =
\sum_{m\in\Delta} f(m)[m]\otimes \sum_i (m\cdot n_i)({\rm contr.} m_i) 
$$
$$=\sum_i (\sum_{m\in\Delta} f(m) (m\cdot n_i)[m])\otimes ({\rm contr.} m_i) 
=\sum_i x_i\otimes  ({\rm contr.} m_i) 
$$
and similarly for the other part of the differential. 

Let us calculate the cohomology. If
$\sigma\not\subseteq \theta^*$  then  
$\Gamma(\Phi, {\mathcal L}_\theta \times {\mathcal L}_\sigma)$ is $0$,  as is the cohomology. If 
$\sigma \subseteq \theta^*$ then by Proposition \ref{maincoro} the cohomology is only nonzero
for $\sigma=\theta^*$, in which case it can be identified with $\Lambda^{\dim \theta^*} (\CC\theta^*)$.
Thus, as a graded vector space, the cohomology of $V\otimes  \Lambda^* N_\CC$ is
given by 
$$
\bigoplus_\theta H_\theta\otimes H_{\theta^*}\otimes \Lambda^{\dim\theta^*}(\CC\theta^*).
$$
It has been observed in \cite{BM} that $R_1(f,\theta)$ is isomorphic to $H_{\theta}$ as 
as a graded vector space (and similar $R_1(g,\theta^*)$ is isomorphic to $H_{\theta^*}$).
Indeed, in the decomposition \eqref{decompose}, the space $\CC[\theta^\circ]$ is the 
kernel of the restriction map from $\CC[\theta]$ to the boundary, so it is
$H_\theta\otimes {\rm Sym^*}(\CC\theta)^\dual$. We have thus established the isomorphism
of the statement of the theorem, but we have not  showed that it is canonical, 
since the decomposition into direct sum of copies of minimal locally free flabby sheaves is not. 

In order to define this isomorphism canonically, we will use the argument of \cite{BM}. Namely,
there are maps of complexes
\begin{equation}\label{3}
\bigoplus_\theta \CC[\theta^\circ\oplus(\theta^*)^\circ]\otimes \Lambda^*N_\CC 
\to V
\to
\bigoplus_\theta \CC[\theta\oplus\theta^*]\otimes \Lambda^*N_\CC 
\end{equation}
with differentials $d_{f,g}$ defined by the same formula. This induces maps in cohomology,
and the composition of these maps has image isomorphic to 
$$\bigoplus_{ \{0\}\subseteq \theta \subseteq K} R_1(f,\theta)\otimes R_1(g,\theta^*) \otimes \Lambda^{\dim \theta^*} (\CC\theta^*)$$
see \cite{BM}.
Thus, this space is a sub-quotient of the cohomology of $V$, and equality of dimensions
implies that it is isomorphic to the cohomology of $V$, i.e. the first map  of \eqref{3} leads to
a surjective map in cohomology and the second one leads to an injective map in cohomology.
\end{proof} 

\begin{remark}
The arguments of Theorem \ref{main} are applicable to the partial (called deformed in earlier papers) 
lattice algebras $\CC[K\oplus K^\dual]^\Sigma$. In the next 
section that we will see that it  can also 
be used for a slightly modified differential.
\end{remark}

\section{Modified double Koszul complexes and GKZ hypergeometric system}
\label{sec4}

In this section we modify the double Koszul complex of the previous section so that the cohomology
is naturally endowed with a flat connection. The issue at hand is that while the spaces
$R_1(g,\theta)$ form bundles over the space of nondegenerate coefficient functions $g$, there
is no natural connection on them. To create such connection, one needs to modify the spaces slightly by altering the action 
of logarithmic derivatives, as in \cite{BH}.

\begin{definition}
Consider the sheaf of abelian groups $\widehat{\CC[K^\dual]}$
on the fan of faces of $K^\dual$ whose sections
over $\sigma\subseteq K^\dual$ are given by $\CC[\sigma]$. We give it a structure of the 
sheaf of modules over ${\rm Sym}^*((\CC\sigma)^\dual)$ by declaring for each 
$c\in\sigma,~\mu\in( \CC\sigma)^\dual$
$$
\mu \widehat{[c]} =
\sum_{n\in\Delta^\dual\cap \sigma}g(n)\mu(n) \widehat{[n+c]} + \mu(c) \widehat{[c]}.
$$
\end{definition}

\begin{remark}
If not for the term $\mu(c)\widehat{[c]}$, the above is just the structure considered 
in the previous section and \cite{BM}. 
Throughout the rest of the paper, we use  $\widehat{[c]}$ as opposed to 
$[c]$ to signify this new module structure. From now on we will also use the notation
$\widehat{\CC[\sigma]}$ for the space of sections of $\widehat{\CC[K^\dual]}$
on the open subset that corresponds to $\sigma$.
\end{remark}

Note that $\widehat{\CC[K^\dual]}$ is no longer naturally graded. However, it is 
naturally filtered, and the associated graded object is naturally isomorphic to 
${\CC[K^\dual]}$ as a graded module. We can use results about
$\CC[K^\dual]$ to infer statements about  $\widehat{\CC[K^\dual]}$ 
 in view of the following theorem,
proved in \cite{BH}.
\begin{theorem}\cite{BH}\label{BHiso}
There exists a non-canonical isomorphism of sheaves of locally free modules 
$\widehat{\CC[K^\dual]}\simeq \CC[K^\dual]$. Moreover, this isomorphism can be chosen
to act as identity on the associated graded objects. 
\end{theorem}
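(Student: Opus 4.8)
The plan is to build the isomorphism one face at a time, by induction on $\dim\sigma$, always lifting the identity on associated graded objects, in the spirit of the Bressler--Lunts construction of minimal flabby sheaves. The first thing I would do is make the filtered picture precise. Filter $A_\sigma:={\rm Sym}^*((\CC\sigma)^\dual)$ so that linear functions have filtration degree $\leq 1$, and filter $\widehat{\CC[\sigma]}$ by $F_{\leq k}=\bigoplus_{c\in\sigma,\ \deg\cdot c\leq k}\CC\,\widehat{[c]}$. The defining formula
$$\mu\,\widehat{[c]}=\sum_{n\in\Delta^\dual\cap\sigma}g(n)\mu(n)\,\widehat{[n+c]}+\mu(c)\,\widehat{[c]}$$
shows that $\mu\cdot F_{\leq k}\subseteq F_{\leq k+1}$, so $\widehat{\CC[\sigma]}$ is a filtered $A_\sigma$-module; since the correction term $\mu(c)\,\widehat{[c]}$ lies in strictly lower filtration degree than the main term, the associated graded is exactly $\CC[\sigma]$ with the deformed module structure of Section \ref{correction}. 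Globally this canonically identifies ${\rm gr}\,\widehat{\CC[K^\dual]}$ with $\CC[K^\dual]$ as a sheaf of graded modules.

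Next I would record the elementary structural facts. The underlying abelian sheaf of $\widehat{\CC[K^\dual]}$, together with its restriction maps, coincides with that of $\CC[K^\dual]$, so $\widehat{\CC[K^\dual]}$ is flabby, and one checks directly from the formula above that these restrictions are strict for the filtrations. Each slice $\{c\in\sigma:\deg\cdot c=k\}$ is finite (a slice of a strictly convex cone), so the filtration on $\widehat{\CC[\sigma]}$ is exhaustive and bounded below; consequently any filtered map inducing an isomorphism on ${\rm gr}$ is itself an isomorphism. Finally, the nondegeneracy of $g$ makes ${\rm gr}\,\widehat{\CC[\sigma]}=\CC[\sigma]$ a \emph{free} $A_\sigma$-module, this being the key observation of \cite{BM} recalled in Section \ref{correction}, and lifting a homogeneous $A_\sigma$-basis of the associated graded to $\widehat{\CC[\sigma]}$ produces a basis; hence $\widehat{\CC[\sigma]}$ is itself free over $A_\sigma$, and $\widehat{\CC[K^\dual]}$ is a flabby locally free sheaf of modules, so the machinery of \cite{BresslerLunts} applies to it.

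The heart of the argument is the inductive globalization. On the zero cone both sheaves have sections $\CC$ and $\phi_{\{0\}}$ is the identity. Assume filtered isomorphisms $\phi_\tau\colon\widehat{\CC[\tau]}\to\CC[\tau]$ with ${\rm gr}\,\phi_\tau={\rm id}$, mutually compatible with all restrictions among the faces $\tau\subsetneq\sigma$, have been chosen; they assemble into $\phi_{\partial\sigma}\colon\Gamma(\partial\sigma,\widehat{\CC[K^\dual]})\to\Gamma(\partial\sigma,\CC[K^\dual])$. Using freeness, first pick \emph{any} filtered $A_\sigma$-linear $\psi\colon\widehat{\CC[\sigma]}\to\CC[\sigma]$ with ${\rm gr}\,\psi={\rm id}$, which is automatically an isomorphism. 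It need not satisfy $r\circ\psi=\phi_{\partial\sigma}\circ r$, where $r$ denotes restriction to $\partial\sigma$, but since ${\rm gr}\,\phi_{\partial\sigma}={\rm id}$ and ${\rm gr}\,\psi={\rm id}$ the two terms have equal associated graded, so $\eta:=\phi_{\partial\sigma}\circ r-r\circ\psi$ is $A_\sigma$-linear and strictly decreases filtration. Because $r\colon\CC[\sigma]\to\Gamma(\partial\sigma,\CC[K^\dual])$ is a strict surjection of filtered $A_\sigma$-modules and $\widehat{\CC[\sigma]}$ is free, $\eta$ lifts to a strictly filtration-decreasing $A_\sigma$-linear $\delta\colon\widehat{\CC[\sigma]}\to\CC[\sigma]$ with $r\circ\delta=\eta$; then $\phi_\sigma:=\psi+\delta$ is a filtered isomorphism, still inducing ${\rm id}$ on ${\rm gr}$, with $r\circ\phi_\sigma=\phi_{\partial\sigma}\circ r$. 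This completes the inductive step, and the collection $\{\phi_\sigma\}$ is the desired global isomorphism, acting as the identity on associated graded objects by construction.

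The main obstacle is the bookkeeping in this last step: the isomorphism must be kept simultaneously compatible with every restriction map, filtered, and equal to the identity on the associated graded, and the point that needs care is that there is no cohomological obstruction to doing so — the correction move succeeds precisely because the restrictions are strictly surjective for the filtrations and the local sections are free, so the discrepancy $\eta$ always lifts. An alternative route, perhaps the one taken in \cite{BH}, is to write $\phi$ explicitly on each cone as $\widehat{[c]}\mapsto[c]+\xi_c$ with $\xi_c\in F_{\leq(\deg\cdot c)-1}\CC[\sigma]$, the $\xi_c$ determined recursively by the requirement $\phi(\mu\,\widehat{[c]})=\mu\,\phi(\widehat{[c]})$; the technical core is then to verify that this recursion is consistent and commutes with passage to faces.
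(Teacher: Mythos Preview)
The paper does not give its own proof of this statement; it is quoted from \cite{BH} and used as a black box. So there is nothing in the present paper to compare your argument against.

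That said, your argument is correct and complete. The filtered setup is exactly right: the extra term $\mu(c)\widehat{[c]}$ sits in strictly lower filtration, so ${\rm gr}\,\widehat{\CC[\sigma]}=\CC[\sigma]$ canonically; freeness of the associated graded (from nondegeneracy of $g$) lifts to freeness of $\widehat{\CC[\sigma]}$ by the standard bounded-below, exhaustive filtration argument; and the underlying abelian sheaf, with its restriction maps, is unchanged, so flabbiness and strictness of restrictions are inherited for free. The inductive correction is the heart of the matter and you handle it cleanly: $\eta=\phi_{\partial\sigma}\circ r - r\circ\psi$ vanishes on associated graded, hence is strictly filtration-decreasing, and because $\widehat{\CC[\sigma]}$ is free over $A_\sigma$ and $r\colon\CC[\sigma]\to\Gamma(\partial\sigma,\CC[K^\dual])$ is a strict $A_\sigma$-linear surjection, $\eta$ lifts to a strictly filtration-decreasing $\delta$; then $\phi_\sigma=\psi+\delta$ does the job. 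Your closing remark is accurate: this is precisely the Bressler--Lunts style of building a map of locally free flabby sheaves cone by cone, and the explicit recursion you sketch as an alternative is essentially the same construction written in coordinates.
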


The following definition mimics the one for $R_1(g,\sigma)$. 
\begin{definition}
For any $\sigma\subseteq K^\dual$ we have a natural inclusion 
$\widehat{\CC[\sigma^\circ]}\to \widehat{\CC[\sigma]}$.
We define $\widehat{R_1(g,\sigma)}$ as the image of 
$$
\widehat{\CC[\sigma^\circ]}/I\widehat{\CC[\sigma^\circ]}\to 
\widehat{\CC[\sigma]}/I\widehat{\CC[\sigma]}
$$
where $I$ is the irrelevant ideal in ${\rm Sym}^*((\CC\sigma)^\dual)$.
\end{definition}

As $g$ varies, the spaces $\widehat{R_1(g,\sigma)}$ can be given a natural flat connection 
based on the following result of \cite{BH}. 
\begin{proposition}\cite{BH}\label{flat}
The space $\widehat{R_1(g,K^\dual)}$ is naturally isomorphic to the dual of the space of 
solutions to the better-behaved GKZ hypergeometric system  ${\rm bbGKZ}(\Delta^\dual,
(K^\dual)^{\circ},\beta=0)$ that can be extended to the solutions of
${\rm bbGKZ}(\Delta^\dual,
K^\dual,\beta=0)$, in the neighborhood of $g$. The space has a natural filtration
by the order of vanishing at $g$ and the associated graded space is naturally isomorphic 
to $R_1(g,K^\dual)$.
\end{proposition}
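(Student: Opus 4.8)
The statement to prove is Proposition \ref{flat}, which identifies $\widehat{R_1(g,K^\dual)}$ with the dual of a certain space of solutions of the better-behaved GKZ system, compatibly with filtrations. Since the proposition is explicitly attributed to \cite{BH}, I will sketch the route that this kind of identification takes rather than reproduce its internals. The plan is to match the two sides as solution spaces of the same system of partial differential operators with respect to the parameters $g$.

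First I would recall the definition of the better-behaved GKZ system ${\rm bbGKZ}(\Delta^\dual,(K^\dual)^\circ,\beta)$: it is a $D$-module on the torus of coefficient functions $g\colon\Delta^\dual\to\CC$, generated by the toric (binomial) operators coming from linear relations among the lattice points of $\Delta^\dual$, together with the Euler operators fixing the degree $\beta$; the ``better-behaved'' feature is that one works with a whole package of generators indexed by $c\in (K^\dual)^\circ$ (rather than a single one), which is what makes the solution sheaf locally free of the expected rank. The key point is that the fibre of this $D$-module is naturally the space $\widehat{\CC[(K^\dual)^\circ]}$ modulo the action of the logarithmic derivatives, where the hatted module structure is precisely the one in the Definition preceding the proposition: the extra term $\mu(c)\widehat{[c]}$ is exactly the contribution of the Euler (homogeneity) operators at $\beta=0$, while the summation term encodes the toric operators. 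So I would establish a $\CC$-linear pairing between $\widehat{\CC[(K^\dual)^\circ]}/I\widehat{\CC[(K^\dual)^\circ]}$ and the space of germs of solutions, send a monomial $\widehat{[c]}$ to evaluation of the solution's appropriate derivative, and check that the defining relations of $\widehat{R_1(g,K^\dual)}$ go over to the ${\rm bbGKZ}$ equations. The ``that can be extended to solutions of ${\rm bbGKZ}(\Delta^\dual,K^\dual,\beta=0)$'' clause corresponds on the algebraic side to taking the image in $\widehat{\CC[K^\dual]}/I\widehat{\CC[K^\dual]}$, i.e. exactly the image map defining $\widehat{R_1(g,K^\dual)}$ as opposed to $\widehat{\CC[(K^\dual)^\circ]}/I\widehat{\CC[(K^\dual)^\circ]}$ alone.

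Next, for the filtration statement: $\widehat{\CC[K^\dual]}$ carries the filtration by order of vanishing at $g$ (equivalently, the filtration whose associated graded is the genuinely graded module $\CC[K^\dual]$, as noted right before Theorem \ref{BHiso}), and Theorem \ref{BHiso} gives a locally free isomorphism $\widehat{\CC[K^\dual]}\simeq\CC[K^\dual]$ acting as the identity on associated gradeds. I would use this to conclude that passing to ${\rm gr}$ turns $\widehat{R_1(g,K^\dual)}$ into $R_1(g,K^\dual)$: the image-of-a-map definition is compatible with the filtration because the inclusion $\widehat{\CC[(K^\dual)^\circ]}\hookrightarrow\widehat{\CC[K^\dual]}$ and the quotient by $I$ are filtered maps, and by local freeness taking ${\rm gr}$ is exact on the relevant terms. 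On the solution side this is the standard statement that the order-of-vanishing filtration on the germs of ${\rm bbGKZ}$ solutions has associated graded equal to the (cohomology of the) ungraded GKZ/Koszul complex, i.e. $R_1(g,K^\dual)$.

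The main obstacle I expect is not the formal matching of relations but the verification that the hatted module structure really is the correct ``residue'' realization of ${\rm bbGKZ}(\Delta^\dual,(K^\dual)^\circ,\beta=0)$ with the right normalization — i.e. that the extra diagonal term $\mu(c)\widehat{[c]}$ reproduces the Euler operators with parameter exactly $\beta=0$ and not a shift, and that the ``extendable to $K^\dual$'' condition is genuinely the image condition and does not cut down (or enlarge) the solution space unexpectedly. This is where the ``better-behaved'' refinement of \cite{BH} does the work, by providing enough operators that the solution sheaf is locally free of rank $\dim R_1(g,K^\dual)$; granting that, the dimension count forces the filtered pairing above to be a perfect pairing, and the proposition follows. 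Since all of this is carried out in \cite{BH}, in the paper itself I would simply cite it and only spell out the identification of module structures needed downstream.
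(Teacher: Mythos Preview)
The paper does not prove Proposition \ref{flat}; it is stated with attribution to \cite{BH} and no proof is supplied (the paragraph beginning ``Indeed'' after the statement is not a proof of the proposition but rather an explanation of how to use it to equip the spaces $\widehat{R_1(g,\sigma)}$ with a flat connection). You recognize this yourself in your final sentence, so your proposal is appropriate: in this paper the correct ``proof'' is simply the citation, and the sketch you give of how the argument runs in \cite{BH} is a reasonable gloss but is not something that can be compared against anything here.
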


Indeed, we can define the connection by declaring the solutions to the aforementioned
equations to be flat. For $\sigma\subsetneq K^\dual$ we can can simply consider 
the solutions to be independent of the parameter $g(v)$ for $v\not\in\sigma$.
We remark that these spaces have a geometric interpretation in terms of cohomology
of hypersurfaces in tori with the Gauss-Manin connection,
see \cite{Batduke} but we will not focus on this.

We are now ready to define and calculate the main objects of interest, namely the 
stringy cohomology spaces associated to dual Gorenstein cones.
\begin{definition}\label{defmain}
Let $K$ and $K^\dual$ be dual reflexive Gorenstein cones. Define as before the space 
$V=\CC[(K\oplus K^\dual)_0]\otimes \Lambda^* N_\CC$. Consider the 
differential $\widehat d_{f,g}$ on it given by 
$$
\widehat d_{f,g} ([m_1\oplus n_1]\otimes P) = d_{f,g}([m_1\oplus n_1]\otimes P)+ [m_1\oplus n_1]\otimes (n_1\wedge P)
$$
$$
=\sum_{m\in \Delta} f(m) [(m+m_1)\oplus n_1] \otimes ({\rm contr.} m)(P) \hskip 100pt
$$$$+ \sum_{n\in\Delta^\dual} g(n)
[m_1\oplus (n+n_1)]\otimes (n\wedge P)
+[m_1\oplus n_1]\otimes (n_1\wedge P)
.
$$
Then we define stringy cohomology $B$-space $H_{B}(X_f,X_g^\dual)$ as the cohomology
of $V$ with respect to $\widehat d_{f,g}$, with the grading given by 
$$
[m\oplus n]\otimes P \mapsto 2 \,m\cdot \deg^\dual + \deg(P).
$$
We will also call this the $A$-space of the mirror pair $(X_g^\dual,X_f)$. Similarly, 
 $H_A(X_f,X_g^\dual)$ (equal to  $H_B(X_g^\dual,X_f)$) is defined as the cohomology of 
 $\CC[(K\oplus K^\dual)_0]\otimes \Lambda^*M_\CC$ by the differential that maps
$$
[m_1\oplus n_1]\otimes P \mapsto
\sum_{m\in \Delta} f(m) [(m+m_1)\oplus n_1] \otimes (m\wedge P)
$$$$+ \sum_{n\in\Delta^\dual} g(n)
[m_1\oplus (n+n_1)]\otimes ({\rm contr.}n)(P)
+[m_1\oplus n_1]\otimes (m_1\wedge P).
$$
\end{definition}

It is straightforward to see that $\widehat d_{f,g}$ is a differential. Moreover, we can describe
its cohomology explicitly. 

\begin{theorem}\label{mainGKZ}
For nondegenerate $f$ and $g$,
the  $B$-space $H_B(X_f,X_g^\dual)$  is naturally isomorphic to
$$
\bigoplus_{ \{0\}\subseteq \theta \subseteq K} R_1(f,\theta)\otimes \widehat {R_1(g,\theta^*)} \otimes \Lambda^{\dim \theta^*} (\CC\theta^*).
$$
The bundle of $H_B(X_f,X_g^\dual)$  over 
the space of nondegenerate $g$ has a natural flat connection.
\end{theorem}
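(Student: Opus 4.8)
The plan is to reduce the computation of $H_B(X_f,X_g^\dual)$ to the already-established Theorem \ref{main} by exploiting the fact that the extra term $[m_1\oplus n_1]\otimes(n_1\wedge P)$ in $\widehat d_{f,g}$ is precisely the correction that converts the $\CC[K^\dual]$-module structure into the $\widehat{\CC[K^\dual]}$-module structure. First I would repeat verbatim the sheaf-theoretic setup of the proof of Theorem \ref{main}, but now building the sheaf $\widehat{\mathcal F}_N$ on the fan of faces of $K^\dual$ from $\widehat{\CC[K^\dual]}$ rather than $\CC[K^\dual]$, and keeping $\mathcal F_M$ unchanged. By Theorem \ref{BHiso} there is a (non-canonical) isomorphism of sheaves of locally free modules $\widehat{\CC[K^\dual]}\simeq\CC[K^\dual]$ acting as the identity on associated graded objects; in particular $\widehat{\mathcal F}_N$ is again locally free over the polynomial ring, so the Bressler--Lunts machinery applies and we get a decomposition $\widehat{\mathcal F}_N=\bigoplus_\sigma \widehat{H_\sigma}\otimes{\mathcal L}_\sigma$ with the same minimal flabby summands ${\mathcal L}_\sigma$ as before and with $\widehat{H_\sigma}\simeq H_\sigma$ as graded vector spaces (indeed $\widehat{H_\sigma}\simeq\widehat{R_1(g,\sigma)}$ by the same kernel-of-restriction argument that identified $H_\sigma$ with $R_1(g,\sigma)$). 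The key point is that the differential $\widehat d_{f,g}$, written in dual bases $(m_i),(n_i)$, becomes $\sum_i x_i\otimes(\mathrm{contr.}\,m_i)+\sum_i \widehat{y_i}\otimes(n_i\wedge)$, where $\widehat{y_i}$ now acts via the \emph{hatted} module structure — exactly the operator $\mu\mapsto\sum g(n)\mu(n)\widehat{[n+c]}+\mu(c)\widehat{[c]}$ whose $\mu(c)\widehat{[c]}$ tail reproduces the extra $n_1\wedge$ term. Hence on each summand $\widehat d_{f,g}$ acts as the differential $d=\sum x_i\otimes(\mathrm{contr.}\,m_i)+\sum y_i\otimes(n_i\wedge)$ of Proposition \ref{maincoro} on $\Gamma(\Phi,{\mathcal L}_\theta\times{\mathcal L}_\sigma)\otimes\Lambda^*N_\CC$.

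With that in place, Proposition \ref{maincoro} gives the cohomology summand-by-summand: it vanishes unless $\sigma\subseteq\theta^*$, and among those only $\sigma=\theta^*$ survives, contributing $\Lambda^{\dim\theta^*}(\CC\theta^*)$. Summing, the cohomology is $\bigoplus_\theta H_\theta\otimes\widehat{H_{\theta^*}}\otimes\Lambda^{\dim\theta^*}(\CC\theta^*)\simeq\bigoplus_\theta R_1(f,\theta)\otimes\widehat{R_1(g,\theta^*)}\otimes\Lambda^{\dim\theta^*}(\CC\theta^*)$, which establishes the isomorphism at the level of graded vector spaces. To upgrade it to a natural one, I would run the same sandwich argument as in Theorem \ref{main}: there are maps of complexes
$$
\bigoplus_\theta \widehat{\CC[\theta^\circ\oplus(\theta^*)^\circ]}\otimes\Lambda^*N_\CC\;\to\; V\;\to\;\bigoplus_\theta \widehat{\CC[\theta\oplus\theta^*]}\otimes\Lambda^*N_\CC
$$
with differentials $\widehat d_{f,g}$ given by the same formula (here $\widehat{\CC[\theta^\circ\oplus(\theta^*)^\circ]}$ denotes the appropriate image space), and the composition in cohomology has image canonically identified with $\bigoplus_\theta R_1(f,\theta)\otimes\widehat{R_1(g,\theta^*)}\otimes\Lambda^{\dim\theta^*}(\CC\theta^*)$ by the definitions of $R_1(f,\theta)$ and $\widehat{R_1(g,\theta^*)}$; since this sub-quotient has the same dimension as the full cohomology computed above, the first map is surjective and the second injective on cohomology, pinning down the isomorphism canonically. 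The grading bookkeeping is routine: the associated-graded of $\widehat d_{f,g}$ with respect to the natural filtration is the old $d_{f,g}$, the identification of summands respects the grading in Definition \ref{defmain}, and the factor of $2$ in $2\,m\cdot\deg^\dual+\deg(P)$ just reflects the usual conventions.

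For the flat connection: as $g$ ranges over the open set of nondegenerate coefficient functions, the spaces $\widehat{R_1(g,\theta^*)}$ carry the natural flat connection supplied by Proposition \ref{flat} (the solutions of the better-behaved GKZ system ${\rm bbGKZ}(\Delta^\dual,(K^\dual)^\circ,\beta=0)$ that extend over $K^\dual$, together with the convention that for $\theta^*\subsetneq K^\dual$ one ignores dependence on $g(v)$ for $v\notin\theta^*$); the factors $R_1(f,\theta)$ and $\Lambda^{\dim\theta^*}(\CC\theta^*)$ are constant in $g$. Taking the direct sum over $\theta$ of these connections, and transporting along the canonical isomorphism just constructed, endows the bundle $H_B(X_f,X_g^\dual)$ with a flat connection. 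The main obstacle — and the place where one must be careful rather than merely formal — is verifying that the canonical isomorphism of the first part actually varies holomorphically with $g$ and therefore genuinely transports the connection: this requires knowing that the sandwich of complexes in \eqref{3}-style display, together with the vanishing statements from Proposition \ref{maincoro}, holds uniformly in a neighborhood of each nondegenerate $g$, which follows because nondegeneracy is an open condition and the local freeness in Theorem \ref{BHiso} is preserved under small deformations of $g$. Everything else is a transcription of the arguments already carried out for Theorem \ref{main}.
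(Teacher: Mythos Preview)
Your proposal is correct and follows essentially the same approach as the paper. The paper's proof is more economical: rather than re-running the Bressler--Lunts decomposition for $\widehat{\mathcal F}_N$ and re-applying Proposition \ref{maincoro}, it simply sets up the sandwich $\bigoplus_\theta \CC[\theta^\circ]\otimes\widehat{\CC[(\theta^*)^\circ]}\otimes\Lambda^*N_\CC\to V\to\bigoplus_\theta \CC[\theta]\otimes\widehat{\CC[\theta^*]}\otimes\Lambda^*N_\CC$ and then invokes Theorem \ref{BHiso} once to say that this entire diagram of complexes is isomorphic to the original diagram \eqref{3}, so the surjectivity/injectivity established in Theorem \ref{main} transfers directly; your version unpacks what that single invocation amounts to, and your key observation---that the extra $n_1\wedge$ term is exactly the $\mu(c)\widehat{[c]}$ tail of the hatted module structure, so $\widehat d_{f,g}$ is the standard Koszul differential for the $\widehat{~}$-module---is the substantive point underlying both arguments.
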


\begin{proof}
We have the maps of complexes
$$
\bigoplus_\theta \CC[\theta^\circ]\otimes \widehat{\CC[(\theta^*)^\circ]}\otimes \Lambda^*N_\CC 
\to V
\to
\bigoplus_\theta \CC[\theta]\otimes \widehat{\CC[\theta^*]}\otimes \Lambda^*N_\CC 
$$
similar to \eqref{3}. Then, as in the proof of Theorem \ref{main} the space
$\bigoplus_{ \{0\}\subseteq \theta \subseteq K} R_1(f,\theta)\otimes \widehat {R_1(g,\theta^*)} 
\otimes \Lambda^{\dim \theta^*} (\CC\theta^*)$ is 
a sub-quotient of the cohomology of $V$ with respect to $\widehat d_{f,g}$. 
In view of Theorem \ref{BHiso}, there is an isomorphism between \eqref{3} and the 
above maps of complexes. This implies that the left map induces a surjection in
the cohomology and the right map induces an injection. This proves the first assertion of the 
theorem, and the flat connection statement follows from Proposition \ref{flat}.
\end{proof}

\begin{remark}
The stringy cohomology spaces  $H_{A/B}(X_f,X_g^\dual)$ are only single-graded
rather than double-graded. However, they possess natural filtrations such that the 
associated graded spaces are those described in \cite{BM}. Thus,  spaces 
$H_{A/B}(X_f,X_g^\dual)$ are stringy analogs of de Rham cohomology
while the spaces of \cite{BM} are stringy analogs of Dolbeault cohomology.
\end{remark}

\section{Product structure on the stringy cohomology spaces} \label{voa}
The previous candidates for the stringy cohomology spaces that were considered
in \cite{BM} have been given a structure of super-commutative algebra (in fact, 
two structures depending on whether $A$ or $B$ version are considered) by 
identifying them with chiral rings of a certain $N=2$ vertex algebra.
We would like to extend these results of \cite{borvert} and \cite{chiralrings}
to the newly constructed spaces $H_B(X_f,X_g^\dual)$ and $H_A(X_f,X_g^\dual)$.
What we find in the process is that contrary to the situation we have encountered
so far in this paper, the $~\widehat{}~$ analog of the vertex algebra of mirror symmetry 
is drastically different from the one previously considered. In fact, it is just the stringy
cohomology space itself.\footnote{This should be viewed as analogous to the statement
of \cite{MSV} that chiral de Rham complex is quasi-isomorphic to the usual de Rham
complex.} Working knowledge of vertex algebras is assumed for this more technical 
section.

Let us first recall the construction of \cite{borvert} and \cite{chiralrings}, focusing
on the $B$ space. Let $M$, $N$, $K$, $K^\dual$, $\Delta$, $\Delta^\dual$, $f$ and $g$
be as before. Consider first the lattice vertex algebra ${\rm Fock}_{M\oplus N}$.
It is generated by the fields 
$$
m^{bos}(z),~n^{bos}(z),~ \ee^{\int m^{bos}(z)},~ \ee^{\int n^{bos}(z)},~
m^{ferm}(z),~n^{ferm}(z)
$$
in the world-sheet variable $z$. 

Consider the differential $D_{f,g}$ on ${\rm Fock}_{M\oplus N}$ 
$$
D_{f,g}:={\rm Res}_{z=0}\Big(
\sum_{m\in \Delta} f(m)m^{ferm}(z)\ee^{\int m^{bos}(z)} 
+
\sum_{n\in \Delta^\dual} g(n)n^{ferm}(z)\ee^{\int n^{bos}(z)} 
\Big).
$$
We denote by $V_{f,g}$ the cohomology of ${\rm Fock}_{M\oplus N}$
with respect to $D_{f,g}$.
It carries a natural structure of $N=2$ vertex algebra, induced from
${\rm Fock}_{M\oplus N}$ by
$$
\begin{array}{rcl}
G^+(z)&=&\sum_k (n^k)^{bos}(z)(m^k)^{ferm}(z)-\partial_z\deg^{ferm}(z)
\\
G^-(z)&=&\sum_k (m^k)^{bos}(z)(n^k)^{ferm}(z)-\partial_z(\deg^\dual)^{ferm}(z)
\\
J(z)&=&\sum_k (m^k)^{ferm}(z)(n^k)^{ferm}(z)+\deg^{bos}(z)-
(\deg^\dual)^{bos}(z)
\\
L(z)&=&\sum_{k}(m^k)^{bos}(z)(n^k)^{bos}(z)
+\frac12\sum_k\partial_z(m^k)^{ferm}(z)(n^k)^{ferm}(z)\\
&&
\hskip -50pt
-\frac12\sum_k
(m^k)^{ferm}(z)\partial_z(n^k)^{ferm}(z)
-\frac12\partial_z \deg^{bos}(z)-\frac12\partial_z(\deg^\dual)^{bos}(z)
\end{array}
$$
The operators $L[0]={\rm Res}_{z=0}(zL(z))$ and $J[0]={\rm Res}_{z=0}J(z)$ play 
a special role and are called conformal weight and fermion numbers respectively.
The main result of \cite{chiralrings} is the following.
\begin{theorem}\cite{chiralrings}
For strongly nondegenerate $f$ and $g$, the above $N=2$ structure on $V_{f,g}$
is of sigma model type. This means that the gradings by 
 $H_A=L[0]-\frac 12 J[0]$ and 
$H_B=L[0]+\frac 12 J[0]$ are integer and nonnegative.
\end{theorem}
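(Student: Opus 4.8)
\begin{proof*}
The plan is to split the claim into the easy integrality assertion and the substantive nonnegativity assertion, and to prove the latter by reducing --- via a filtration of ${\rm Fock}_{M\oplus N}$ by oscillator excitation --- to the combinatorics of Theorem~\ref{main} together with an acyclicity statement for the part of the complex that lies outside the positive cones.

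First I would note that $D_{f,g}$ is the residue of a field of conformal weight $1$ and $J$-charge $0$, so it commutes with $L[0]$ and $J[0]$; hence the bigrading descends to $V_{f,g}$ and it suffices to control the spectrum of $H_A$ and $H_B$ on $V_{f,g}$. For integrality one evaluates $L[0]$ and $J[0]$ on a lattice ground state $\ee^{m\oplus n}$: the background-charge terms $-\frac12\partial_z\deg^{bos}(z)$, $-\frac12\partial_z(\deg^\dual)^{bos}(z)$ of $L(z)$ contribute $\frac12(\deg\cdot n+m\cdot\deg^\dual)$ to $L[0]$, while $\deg^{bos}(z)-(\deg^\dual)^{bos}(z)$ in $J(z)$ contributes $\deg\cdot n-m\cdot\deg^\dual$ to $J[0]$, so that
$$
H_B\,\ee^{m\oplus n}=(m\cdot n+\deg\cdot n)\,\ee^{m\oplus n},\qquad H_A\,\ee^{m\oplus n}=(m\cdot n+m\cdot\deg^\dual)\,\ee^{m\oplus n}.
$$
Since the bosonic and fermionic oscillator generators of ${\rm Fock}_{M\oplus N}$ shift $(H_A,H_B)$ by pairs of integers --- by $(j,j)$ for a weight-$j$ bosonic mode and by $(j-\frac12,j+\frac12)$ or $(j+\frac12,j-\frac12)$ for a weight-$j$ fermionic mode --- integrality holds on all of ${\rm Fock}_{M\oplus N}$, hence on $V_{f,g}$.

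For nonnegativity, note that the span of the bidegree components with $H_A<0$ or $H_B<0$ is $D_{f,g}$-invariant, so $V_{f,g}$ splits along it and the claim is equivalent to the acyclicity of that subcomplex. Filter ${\rm Fock}_{M\oplus N}$ by total oscillator excitation. The leading term of $D_{f,g}$ then keeps only the lattice shifts by $\Delta$ and $\Delta^\dual$ and the lowest fermion modes, i.e.\ it acts on the zero-mode subspace as the double Koszul differential $d_{f,g}$, so the first page of the associated spectral sequence is the cohomology of $d_{f,g}$ on the zero modes, tensored with the full oscillator Fock space. On the orbit of the vacuum the zero-mode cohomology is $\bigoplus_\theta R_1(f,\theta)\otimes R_1(g,\theta^*)\otimes\Lambda^{\dim\theta^*}(\CC\theta^*)$ by Theorem~\ref{main}; on every other lattice orbit it is acyclic, by a contracting homotopy built from the vertex operators $\ee^{\int m^{bos}(z)}$, $m\in\Delta$ --- the vertex-algebra analog of dividing by $[m]$ in $\CC[K]$ away from the cone. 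The surviving factors --- the face contributions $R_1(f,\theta)$, $R_1(g,\theta^*)$ and the wedge factors $\Lambda^{\dim\theta^*}(\CC\theta^*)$, bigraded in $H_A,H_B\ge0$ by the estimates already used in \cite{BM}, and the oscillators, bigraded in $H_A,H_B\ge0$ by the shifts above --- all sit in nonnegative bidegree, and since the spectral sequence preserves the bigrading and converges (the filtration being finite in each fixed bidegree), so does $V_{f,g}$.

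The step I expect to be the main obstacle, and where the \emph{strong} nondegeneracy hypothesis is indispensable, is making the reduction of the previous paragraph rigorous at the level of the full vertex algebra rather than at the level of zero modes. The Bressler--Lunts mechanism behind Theorem~\ref{main} requires the vertex-algebra-enriched analogs of the sheaves ${\mathcal F}_M$ and ${\mathcal F}_N$ to be locally free over the relevant rings of logarithmic derivatives, and it requires the spectral sequences above to degenerate without creating cohomology in bad bidegrees; for merely nondegenerate $f$ and $g$ this may fail, and strong nondegeneracy (introduced for exactly this purpose in \cite{chiralrings}) is what guarantees local freeness, finiteness of the graded components of $V_{f,g}$, and the acyclicity of the negative-bidegree part. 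Carrying out the construction of the homotopy off the diagonal $m\cdot n=0$ and checking its compatibility with the oscillator filtration is the technical heart of the argument; this is done in detail in \cite{chiralrings}, which we invoke.
\end{proof*}
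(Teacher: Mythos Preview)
This theorem is not proved in the present paper; it is quoted from \cite{chiralrings}, and the paper only sketches enough of that argument later (in the proof of Theorem~\ref{mainthm}) to adapt it to the modified differential $\widehat D_{f,g;B}$. So the relevant comparison is between your outline and the method of \cite{chiralrings} as summarized here: first use the contracting operators $R$ of \cite[Propositions~8.1--8.2]{borvert} to pass from ${\rm Fock}_{M\oplus N}$ to ${\rm Fock}_{M\oplus K^\dual}$, and only then run a spectral sequence --- not for an oscillator filtration, but for the double complex whose two differentials are the $f$-part and the $g$-part of $D_{f,g}$. That spectral sequence does not converge on all of ${\rm Fock}_{M\oplus N}$ but does on ${\rm Fock}_{M\oplus K^\dual}$, and strong nondegeneracy enters precisely in computing the $d_g$-cohomology there.

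Your oscillator-excitation filtration is a different route, and as written it has a gap. The residue ${\rm Res}_{z=0}\,m^{ferm}(z)\ee^{\int m^{bos}(z)}$ acting on a state over the lattice point $m_0\oplus n_0$ carries the factor $z^{m\cdot n_0}$ from the vertex operator, so which fermion and oscillator modes survive in the residue depends on $m\cdot n_0$; the associated-graded differential is therefore \emph{not} simply $d_{f,g}$ on zero modes tensored with the identity on oscillators. For the same reason the ``first page'' is not the cohomology of Theorem~\ref{main} tensor the oscillator Fock space. Moreover, at fixed $(H_A,H_B)$ there are infinitely many lattice ground states in $M\oplus N$ (the two constraints leave a sublattice of positive rank), so the oscillator filtration is not finite in each bidegree and the convergence claim is unjustified. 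This is exactly why \cite{chiralrings} first pushes the support into the cone via the homotopies $R$ before filtering.

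Finally, your last paragraph effectively defers the substantive step back to \cite{chiralrings}, which is circular for a theorem that \emph{is} \cite{chiralrings}. If you want a self-contained argument, you should replace the oscillator filtration by the two-step strategy above: construct the operators $R$ (built from $\ee^{\int m^{bos}}$ and $N^{ferm}$ only) giving $\{R,D_{f,g}\}={\rm id}+(\text{degree-raising})$ along each ray of $K^\dual$, use them to reduce to ${\rm Fock}_{M\oplus K^\dual}$, and then run the $(d_f,d_g)$ double-complex spectral sequence, computing the $d_g$-page with the strong nondegeneracy hypothesis.
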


\begin{remark}
We refer the reader to \cite{chiralrings} for the technical definition of 
\emph{strong} non-degeneracy. We will not need the precise statement here.
\end{remark}

Every vertex algebra of sigma model type has two natural subspaces, called 
$A$ and $B$ chiral rings characterized as kernels of $H_A$ and $H_B$ respectively.
In the case of $V_{f,g}$ these have been computed to coincide with 
the stringy cohomology spaces suggested in \cite{BM}. The computation is based 
on the following result.
\begin{theorem}\label{mainchiralrings}\cite{chiralrings}
Let $f$ and $g$ be strongly non-degenerate.
Then $D_{f,g}$-cohomology of ${\rm Fock}_{M\oplus N}$ 
has only nonnegative integer eigenvalues of $H_A$.
Moreover, the $H_A=0$ eigenspace comes from
${\rm Fock}_{K\oplus (K^\dual-\deg^\dual)}$. The operator $H_B$
also has only nonnegative integer eigenvalues on $V_{f,g}$
and its kernel comes from ${\rm Fock}_{(K-\deg)\oplus K^\dual}$.
\end{theorem}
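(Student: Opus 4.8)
The plan is to reduce this to the finite-dimensional computation of Section~\ref{correction} by resolving away the oscillator directions of the lattice vertex algebra. First I would use the momentum decomposition ${\rm Fock}_{M\oplus N}=\bigoplus_{(m,n)\in M\oplus N}{\rm Fock}^{(m,n)}$, where ${\rm Fock}^{(m,n)}$ is the module of the bosonic and fermionic oscillator algebras built on the highest weight vector $[m\oplus n]$. The reflexive Gorenstein condition forces $m'\cdot\deg^\dual=1$ for $m'\in\Delta$ and $\deg\cdot n'=1$ for $n'\in\Delta^\dual$, so the screening operators in $D_{f,g}$ move $(m,n)$ only by $\Delta\oplus 0$ and $0\oplus\Delta^\dual$, and $D_{f,g}$ respects the evident gradings. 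Reading the explicit $L(z)$ and $J(z)$, one checks (up to the normalization conventions of \cite{borvert}) that on a bare highest weight vector $H_B([m\oplus n])=(m+\deg)\cdot n$ and $H_A([m\oplus n])=m\cdot(n+\deg^\dual)$, while every bosonic or fermionic oscillator raises both $H_A$ and $H_B$ by a nonnegative integer. Hence $H_A$ and $H_B$ take integer values on all of ${\rm Fock}_{M\oplus N}$, but not nonnegative values in general, since $(m+\deg)\cdot n$ can be negative once $m+\deg\notin K$.

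Second I would isolate the subcomplexes adapted to each operator. Because $K$ is a cone containing $\Delta$ and $K^\dual$ is a cone containing $\Delta^\dual$, the momentum conditions $m+\deg\in K,\ n\in K^\dual$ and $m\in K,\ n+\deg^\dual\in K^\dual$ are each preserved by $D_{f,g}$, so ${\rm Fock}_{(K-\deg)\oplus K^\dual}$ and ${\rm Fock}_{K\oplus(K^\dual-\deg^\dual)}$ are genuine $D_{f,g}$-subcomplexes. On the first, duality of $K$ and $K^\dual$ together with the first step give $H_B\ge 0$, with equality on a highest weight vector exactly when $m+\deg$ and $n$ lie on a pair of dual faces $\theta,\theta^*$; symmetrically $H_A\ge 0$ on the second. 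So it suffices to show that $H_B\ge 0$ on $V_{f,g}$ and that every class in $\ker H_B$ is represented by a cocycle from ${\rm Fock}_{(K-\deg)\oplus K^\dual}$, and the mirror statement for $H_A$.

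Third, and this is the heart of the matter, I would repeat the reduction used in the proof of Theorem~\ref{main}. Split $D_{f,g}=D_f+D_g$ and take $D_f$-cohomology first: holding the $N$-momentum and the fermionic state fixed, $D_f$ acts on the $M$-oscillator Fock module twisted by $\CC[M]$ as a Koszul-type differential, and it is precisely here that \emph{strong} nondegeneracy of $f$ is needed to ensure that this cohomology is small --- weight-finite, supported on $M$-momenta with $m+\deg\in K$, and with associated graded pieces assembling the $R_1(f,\theta)$ over faces $\theta\subseteq K$ exactly as ${\mathcal F}_M$ decomposes in the proof of Theorem~\ref{main}. The subsequent $D_g$-cohomology reduces the $N$-side in the same way, landing on $n\in K^\dual$ and producing the $R_1(g,\theta^*)\otimes\Lambda^{\dim\theta^*}(\CC\theta^*)$ factors. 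The upshot is that the $H_B$-graded pieces of $V_{f,g}$ are computed by a finite complex supported inside ${\rm Fock}_{(K-\deg)\oplus K^\dual}$, so the ground-state values $(m+\deg)\cdot n$ and the oscillator shifts of the first step force $H_B$ to be nonnegative and integral, with $\ker H_B$ equal to $\bigoplus_\theta R_1(f,\theta)\otimes R_1(g,\theta^*)\otimes\Lambda^{\dim\theta^*}(\CC\theta^*)$; the $H_A$ statement follows by interchanging $(M,K,f)$ with $(N,K^\dual,g)$, replacing ${\rm Fock}_{(K-\deg)\oplus K^\dual}$ with ${\rm Fock}_{K\oplus(K^\dual-\deg^\dual)}$ and $\Lambda^*N_\CC$ with $\Lambda^*M_\CC$.

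The step I expect to be the main obstacle is the third one: transplanting the finite subquotient-and-dimension-count argument of Theorem~\ref{main} into the infinite-dimensional vertex algebra forces one to control the oscillator-direction $D_f$-cohomology, and this is exactly the place where ordinary nondegeneracy is insufficient and the \emph{strong} nondegeneracy of \cite{chiralrings} must be invoked. One must also check that the relevant graded pieces are finite-dimensional, so that a subquotient of the correct dimension is everything, and that the two intertwined Koszul reductions --- the bosonic Heisenberg one and the fermionic $\Lambda^*$ one --- are compatible with the $G$-polynomial and strong Lefschetz bounds of Section~\ref{inter} used in Theorem~\ref{main}.
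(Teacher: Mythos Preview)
This theorem is not proved in the present paper; it is quoted verbatim from \cite{chiralrings}, so strictly speaking there is no proof here to compare against. That said, the proof of Theorem~\ref{mainthm} in Section~\ref{voa} recapitulates enough of the argument of \cite{chiralrings} that one can see how your outline lines up with it, and there is one genuine gap worth naming.

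Your steps one and two are fine and match the standard setup. The problem is in step three. You propose to split $D_{f,g}=D_f+D_g$ and run the associated spectral sequence directly on ${\rm Fock}_{M\oplus N}$, but as the paper itself remarks in the proof of Theorem~\ref{mainthm}, ``This spectral sequence does not converge on the whole ${\rm Fock}_{M\oplus N}$, but does converge on ${\rm Fock}_{M\oplus K^\dual}$.'' The $N$-grading $\deg\cdot\bullet$ is unbounded below on the full lattice, so taking $D_g$-cohomology first (or $D_f$-cohomology first, symmetrically) gives a spectral sequence with no a priori reason to abut to the $D_{f,g}$-cohomology. Your outline never addresses this, and your appeal to the dimension-count strategy of Theorem~\ref{main} cannot repair it: that argument lives entirely in a finite-dimensional setting where convergence is automatic.

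The missing ingredient, which the paper explicitly invokes, is a preliminary reduction from ${\rm Fock}_{M\oplus N}$ to ${\rm Fock}_{M\oplus K^\dual}$ via the homotopy operators $R$ of \cite[Propositions~8.1--8.2]{borvert}. These are built from $\ee^{\int M^{bos}}$ and $N^{ferm}$ fields and satisfy $\{R,D_{f,g}\}={\rm id}+(\text{grading-increasing})$ on the complement of the half-lattice, so they kill cohomology outside ${\rm Fock}_{M\oplus K^\dual}$ before any spectral sequence is run. Only after this reduction does the $D_f$/$D_g$ double-complex argument you sketch become legitimate, and it is on ${\rm Fock}_{M\oplus K^\dual}$ that strong nondegeneracy is used to control the $D_g$-cohomology (forcing, in particular, the $H_B=0$ part to come from $c\in K-\deg$). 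You correctly flag step three as the obstacle, and you correctly identify strong nondegeneracy as the key hypothesis there, but the specific mechanism you are missing is this homotopy-based half-lattice reduction, not a direct transplant of the Section~\ref{correction} argument.
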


As a corollary, the kernel of $H_B$ is isomorphic to the cohomology
of $D_{f,g}$ of the $H_B=0$ subspace of ${\rm Fock}_{(K-\deg)\oplus K^\dual}$.
This subspace is seen to be spanned by elements that correspond to fields
of the form\footnote{with the normal ordering implicitly assumed}
\begin{equation}\label{B}
n_1^{ferm}(z)\cdots n_k^{ferm}(z)\ee^{\int (m-\deg)^{bos}(z)+n^{bos}(z)}
\end{equation}
for $n_i\in N_\CC$, $m\in K$, $n\in K^\dual$, $m\cdot n=0$. The space of these
fields is naturally isomorphic to $\CC[(K\oplus K^\dual)_0]\otimes \Lambda^*N_\CC$,
with the action of $D_{f,g}$ given by $d_{f,g}$ from \eqref{d}. This leads to the 
following result.
\begin{corollary}\cite{chiralrings}\label{Balg}
For strongly nondegenerate $f$ and $g$, the cohomology of $\CC[(K\oplus K^\dual)_0]\otimes \Lambda^*N_\CC$ with respect to $d_{f,g}$ is naturally identified with the $B$ chiral ring
of $V_{f,g}$. It thus carries a natural structure of graded associative algebra.
\end{corollary}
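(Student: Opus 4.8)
The plan is to obtain the algebra structure not by writing anything directly on $\CC[(K\oplus K^\dual)_0]\otimes \Lambda^*N_\CC$, but by transporting the vertex algebra product across the identification already supplied by Theorem \ref{mainchiralrings}, so that the only genuinely new input is the general fact that the $B$ chiral ring of an $N=2$ vertex algebra of sigma model type is a graded associative super-commutative algebra. First I would recall that in any such vertex algebra the subspace $\ker H_B$ is closed under the normally ordered ($n=-1$) product $a,b\mapsto a_{(-1)}b$, that this product is graded super-commutative and associative modulo the image of the differential, and that it therefore descends to a genuine graded associative super-commutative structure on the $B$ chiral ring. Applying this to $V=V_{f,g}$, which is of sigma model type for strongly nondegenerate $f$ and $g$ by the theorem of \cite{chiralrings} recalled above, we conclude that the $B$ chiral ring $\ker H_B\subseteq V_{f,g}$ carries such a structure.

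Next I would import the vector space identification and transport the structure through it. By Theorem \ref{mainchiralrings} the operator $H_B$ has nonnegative integer eigenvalues on $V_{f,g}$ and its kernel is computed by the $H_B=0$ part of ${\rm Fock}_{(K-\deg)\oplus K^\dual}$; as recalled in the discussion preceding the statement, this subcomplex is spanned by the fields \eqref{B}, the requirement $H_B=0$ forces $m\cdot n=0$ there, so it is isomorphic as a graded vector space to $\CC[(K\oplus K^\dual)_0]\otimes \Lambda^*N_\CC$, and on it the operator $D_{f,g}$ acts as the differential $d_{f,g}$ of \eqref{d}. Hence
$$
H^*\big(\CC[(K\oplus K^\dual)_0]\otimes \Lambda^*N_\CC,\ d_{f,g}\big)\;=\;H^*\big(({\rm Fock}_{(K-\deg)\oplus K^\dual})^{H_B=0},\ D_{f,g}\big)\;\cong\;\ker H_B\subseteq V_{f,g},
$$
and we define the product on the left-hand side to be the one carried over from the $B$ chiral ring by this isomorphism. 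Since the isomorphism matches the grading on the combinatorial side with the $H_B$-grading, the resulting product is graded, and being the transport of a graded associative super-commutative product it is again graded associative super-commutative, which is the assertion.

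The step I expect to be the main obstacle is establishing, with enough care to apply it here, the general chiral-ring fact invoked in the first paragraph, rather than any of the combinatorics. One must check that the operator product expansion of two states killed by $H_B$ has no singular part --- which follows because conformal weights add under the OPE while $H_B$ is bounded below by $0$, so a pole of order $k$ would yield a state of $H_B$-eigenvalue $-k$ --- and then that the resulting $n=-1$ product is super-commutative and associative up to $D_{f,g}$-exact terms; the latter is the usual consequence of skew-symmetry together with the Borcherds (Jacobi) identity, but it has to be combined with the $N=2$ relations among $G^\pm$, $J$, $L$ and with the sigma-model-type property of \cite{chiralrings} to ensure the correcting exact terms can be chosen inside $\ker H_B$. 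A secondary, purely mechanical point is to confirm that the $H_B=0$ part of ${\rm Fock}_{(K-\deg)\oplus K^\dual}$ is stable under $D_{f,g}$ and that $D_{f,g}$ acts on it by $d_{f,g}$; this reduces to the commutation of $D_{f,g}$ with $H_B$ and to inspecting the zero mode of the residue defining $D_{f,g}$ on the exponential fields \eqref{B}, which shifts the charge lattice exponent by an element of $\Delta$ or of $\Delta^\dual$ and reproduces, respectively, the two summands of \eqref{d}.
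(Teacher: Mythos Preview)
Your proposal is correct and follows essentially the same route as the paper. The paper does not give a separate proof of this corollary --- it is imported from \cite{chiralrings} --- but the surrounding discussion (the identification of the $H_B=0$ subspace of ${\rm Fock}_{(K-\deg)\oplus K^\dual}$ with $\CC[(K\oplus K^\dual)_0]\otimes \Lambda^*N_\CC$ via the fields \eqref{B}, and the fact that $D_{f,g}$ acts there as $d_{f,g}$) is exactly your second paragraph, and the OPE argument you sketch for the product is precisely the one the paper spells out later for the parallel Corollary \ref{maincoro5}.

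One small slip: at the end you write that the isomorphism ``matches the grading on the combinatorial side with the $H_B$-grading.'' Since $H_B$ vanishes identically on the $B$ chiral ring, it cannot furnish the grading; the relevant grading is by $J[0]$ (equivalently, in the paper's language, the sum of fermion number and cohomology grading --- see the remark following Corollary \ref{maincoro5}). This does not affect the logic of your argument, only the labeling of which operator supplies the grading.
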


\begin{remark}
It is currently very unclear how to write the product implied by the above corollary without
the full force of vertex algebra machinery. The trouble is that as one takes the operator
product expansions of the fields of \eqref{B}, one typically lands in 
${\rm Fock}_{(K-2\deg)\oplus K^\dual}$ rather than 
${\rm Fock}_{(K-\deg)\oplus K^\dual}$. It then takes delicate vertex algebra considerations to
prove that these fields can be reduced to fields from ${\rm Fock}_{(K-\deg)\oplus K^\dual}$
in $D_{f,g}$ cohomology.
\end{remark}

We would like to extend the results of Theorem \ref{mainchiralrings} and Corollary \ref{Balg}
to the new version of the differential $\widehat d_{f,g}$ on 
$\CC[(K\oplus K^\dual)_0]\otimes \Lambda^*N_\CC$. The first observation is that the 
additional term in it is actually a residue of a very natural field. For an element 
$[m\oplus n]\otimes P$ of $\CC[(K\oplus K^\dual)_0]\otimes \Lambda^*N_\CC$
we denote the corresponding field by $P(N^{ferm})(z) \ee^{\int (m-\deg)^{bos}(z)+n^{bos}(z)}$.
\begin{proposition}\label{routine}
The action of ${\rm Res}_{z=0} G^-(z)$ on a field
$$
P(N^{ferm})(z) \ee^{\int (m-\deg)^{bos}(z)+n^{bos}(z)} 
$$
is given by 
$$
(n\wedge P)(N^{ferm})(z) \ee^{\int (m-\deg)^{bos}(z)+n^{bos}(z)}.
$$
\end{proposition}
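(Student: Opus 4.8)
The plan is to compute ${\rm Res}_{z=0} G^-(z)$ term by term by the standard operator product expansions in the lattice vertex algebra ${\rm Fock}_{M\oplus N}$, using that each constituent current has at most a simple pole against the fields that make up the target, so that a single round of Wick contractions suffices. Recall that $G^-(z)=\sum_k (m^k)^{bos}(z)(n^k)^{ferm}(z)-\partial_z(\deg^\dual)^{ferm}(z)$ for dual bases $(m^k)$ of $M$ and $(n^k)$ of $N$. First I would record the two facts that are needed. For $b\in M$ one has $(b)^{bos}(z)\,\ee^{\int\lambda^{bos}(w)}\sim \frac{\langle b,\lambda\rangle}{z-w}\,\ee^{\int\lambda^{bos}(w)}$, where $\langle\cdot,\cdot\rangle$ is the symmetric form on $M\oplus N$ induced by the pairing; in particular, with $\lambda=(m-\deg)\oplus n$ we get $\langle m^k\oplus 0,\lambda\rangle=m^k\cdot n$, and $\sum_k(m^k\cdot n)\,n^k=n$. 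Second, $(n^k)^{ferm}(z)$ is an ``$N$-type'' fermion, so it has regular OPE both with the bosonic exponential fields and with the ``$N$-type'' fermions $n_i^{ferm}$ out of which $P(N^{ferm})$ is built; hence it contributes no further contraction and acts simply by left wedge multiplication.

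Next I would evaluate ${\rm Res}_{z=0}$ of the normal-ordered piece $\sum_k (m^k)^{bos}(z)(n^k)^{ferm}(z)$ on the state corresponding to $P(N^{ferm})(z)\,\ee^{\int(m-\deg)^{bos}(z)+n^{bos}(z)}$. Since $(n^k)^{ferm}$ produces no contraction and $(m^k)^{bos}$ contributes exactly a simple pole against the exponential, the residue picks out $\sum_k \langle m^k\oplus 0,\lambda\rangle\,(n^k)^{ferm}$ acting by wedging, i.e.\ $\sum_k(m^k\cdot n)\,(n^k\wedge P)=(n\wedge P)$, times the same exponential. I would then check that the term $-\partial_z(\deg^\dual)^{ferm}(z)$ contributes nothing: since $\deg^\dual\in N$, the field $(\deg^\dual)^{ferm}(z)$ is again of $N$-type, so its OPE with $P(N^{ferm})(z)\,\ee^{\int(m-\deg)^{bos}(z)+n^{bos}(z)}$ is regular, hence so is that of $\partial_z(\deg^\dual)^{ferm}(z)$, and its residue vanishes. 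Combining the two computations yields the claimed formula.

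The computation is routine, and the only point that demands care is the bookkeeping of fermionic signs: one must anticommute $(n^k)^{ferm}(z)$ through the remaining fields in a way consistent with the normal-ordering and sign conventions of \cite{borvert} and \cite{chiralrings}, and confirm that the normal-ordered product ${:}(m^k)^{bos}(n^k)^{ferm}{:}(z)$ generates no spurious ``double-contraction'' correction — which it does not, because a bosonic current and a fermion have trivial mutual OPE. I expect this sign and normal-ordering check to be the main, and essentially the only, obstacle.
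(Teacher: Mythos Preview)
Your proposal is correct and follows essentially the same route as the paper: compute the OPE of the main term $\sum_k (m^k)^{bos}(z)(n^k)^{ferm}(z)$ against the target field, use that only the bosonic current contracts (with a simple pole against the exponential), and collapse $\sum_k (m^k\cdot n)\,n^k=n$. The only cosmetic difference is that the paper discards $-\partial_z(\deg^\dual)^{ferm}(z)$ by the general fact that ${\rm Res}_{z=0}$ of a total $z$-derivative vanishes, whereas you argue via regularity of its OPE with the target; both justifications are immediate.
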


\begin{proof}
This is a routine lattice vertex algebra calculation, which we nonetheless 
include for the benefit of the reader.
The $\partial_z(\deg^\dual)^{ferm}(z)$ in $G^-(z)$ does not contribute to the residue at
zero and can be ignored. Then we have the operator product expansion
$$
\sum_k (m^k)^{bos}(z)(n^k)^{ferm}(z) 
P(N^{ferm})(w) \ee^{\int (m-\deg)^{bos}(w)+n^{bos}(w)} 
$$
$$\sim (z-w)^{-1}\sum_k (m^k\cdot n)(n^k)^{ferm}(w) 
P(N^{ferm})(w) \ee^{\int (m-\deg)^{bos}(w)+n^{bos}(w)} 
$$
$$
=(z-w)^{-1}n^{ferm}(w)P(N^{ferm})(w) \ee^{\int (m-\deg)^{bos}(w)+n^{bos}(w)}
$$
$$
=(z-w)^{-1}(n\wedge P)(N^{ferm})(z) \ee^{\int (m-\deg)^{bos}(z)+n^{bos}(z)},
$$
which verifies the statement of the proposition.
\end{proof}

\begin{remark}
The operator  ${\rm Res}_{z=0}G^-(z)$ has an important geometric meaning. In
terms of $X_g^\dual$ it roughly corresponds to the de Rham differential.
Also note that cohomology of $V_{f,g}$ with respect to ${\rm Res}_{z=0}G^-(z)$
is the $B$ chiral ring of $V_{f,g}$.
\end{remark}

It is therefore reasonable to add the residue of $G^-(z)$ to the definition 
of $D_{f,g}$.
\begin{definition}\label{defwidehatv}
We denote by $\widehat V_{f,g;B}$ the cohomology of 
${\rm Fock}_{M\oplus N}$ by the differential $\widehat D_{f,g;B}$ given by
$$
{\rm Res}_{z=0}\Big(
\sum_{m\in \Delta} f(m)m^{ferm}(z)\ee^{\int m^{bos}(z)} 
+ \sum_{n\in \Delta^\dual} g(n)n^{ferm}(z)\ee^{\int n^{bos}(z)} 
$$
$$+\sum_k (m^k)^{bos}(z)(n^k)^{ferm}(z)-\partial_z(\deg^\dual)^{ferm}(z)
\Big)
$$
for dual bases $(m^k)$ and $(n^k)$ of $M$ and $N$. 
\end{definition}

We will now show that $\widehat V_{f,g;B}$
is naturally isomorphic to the 
stringy cohomology $B$-space $H_B(X_f,X_g^\dual)$.
\begin{theorem}\label{mainthm}
For strongly nondegenerate $f$ and $g$ the cohomology $\widehat V_{f,g;B}$  
of Definition \ref{defwidehatv} is naturally isomorphic to $H_B(X_f,X_g^\dual)$.
\end{theorem}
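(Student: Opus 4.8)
The plan is to transport the computation of the $B$-chiral ring of $V_{f,g}$ from \cite{chiralrings} (Theorem \ref{mainchiralrings} and Corollary \ref{Balg}) to the perturbed differential $\widehat D_{f,g;B}=D_{f,g}+{\rm Res}_{z=0}G^-(z)$, to recognize the extra term by means of Proposition \ref{routine}, and then to read off the answer from Theorem \ref{mainGKZ}. As a preliminary I would verify that $\widehat D_{f,g;B}^2=0$; by a routine lattice vertex algebra computation this splits into $D_{f,g}^2=0$ (known), $({\rm Res}_{z=0}G^-(z))^2=0$ (the $G^-(z)G^-(w)$ operator product is regular), and the anticommutator $\{D_{f,g},{\rm Res}_{z=0}G^-(z)\}=0$, the last because the operator product of $G^-(z)$ with $m^{ferm}(w)\ee^{\int m^{bos}(w)}$ contributes only a total $z$-derivative to the residue while its operator product with $n^{ferm}(w)\ee^{\int n^{bos}(w)}$ contributes a square of a fermionic field.

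The main point is to see that the reduction of \cite{chiralrings} survives the perturbation. Recall that, for strongly nondegenerate $f$ and $g$, that reduction identifies the $H_B=0$ part of the $D_{f,g}$-cohomology of ${\rm Fock}_{M\oplus N}$ with the cohomology of $\CC[(K\oplus K^\dual)_0]\otimes\Lambda^*N_\CC$ under $d_{f,g}$, realized as the span of the fields \eqref{B} inside ${\rm Fock}_{(K-\deg)\oplus K^\dual}$, and the whole apparatus is compatible with the lattice filtration by $[m\oplus n]\otimes P\mapsto m\cdot\deg^\dual+\deg\cdot n$. The key observation is that ${\rm Res}_{z=0}G^-(z)$ carries no lattice-shifting vertex operator: it is homogeneous of lattice degree $0$, whereas $D_{f,g}$ raises the lattice degree by one (equivalently, on $V_{f,g}$ the operator ${\rm Res}_{z=0}G^-(z)$ raises $H_A$ by exactly one and preserves $H_B$). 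Hence $\widehat D_{f,g;B}$ is a perturbation of $D_{f,g}$ of strictly smaller lattice degree, and an argument parallel to that of \cite{chiralrings} transports the reduction: $\widehat V_{f,g;B}$ is computed by $\CC[(K\oplus K^\dual)_0]\otimes\Lambda^*N_\CC$ equipped with the restriction of $\widehat D_{f,g;B}$ to the span of the fields \eqref{B}. By Corollary \ref{Balg} the $D_{f,g}$-part of this restriction is $d_{f,g}$ of \eqref{d}, and by Proposition \ref{routine} the ${\rm Res}_{z=0}G^-(z)$-part is $[m_1\oplus n_1]\otimes P\mapsto[m_1\oplus n_1]\otimes(n_1\wedge P)$, so the restricted differential is precisely $\widehat d_{f,g}$ of Definition \ref{defmain}. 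Note that only the associated graded of the outcome is the old stringy $B$-space of \cite{BM}; the lattice filtration deforms it into the version with each $R_1(g,\theta^*)$ replaced by $\widehat{R_1(g,\theta^*)}$, consistently with the remark following Theorem \ref{mainGKZ}.

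It then remains to record this isomorphism canonically. The cohomology of $\big(\CC[(K\oplus K^\dual)_0]\otimes\Lambda^*N_\CC,\widehat d_{f,g}\big)$ is $H_B(X_f,X_g^\dual)$ by definition, and by Theorem \ref{mainGKZ} it is $\bigoplus_{\{0\}\subseteq\theta\subseteq K}R_1(f,\theta)\otimes\widehat{R_1(g,\theta^*)}\otimes\Lambda^{\dim\theta^*}(\CC\theta^*)$. To pin down the isomorphism $\widehat V_{f,g;B}\cong H_B(X_f,X_g^\dual)$ independently of the non-canonical decompositions into minimal flabby sheaves, I would run the same sandwich as in the proof of Theorem \ref{mainGKZ}, inserting the sub- and quotient-complexes $\bigoplus_\theta\CC[\theta^\circ]\otimes\widehat{\CC[(\theta^*)^\circ]}\otimes\Lambda^*N_\CC$ and $\bigoplus_\theta\CC[\theta]\otimes\widehat{\CC[\theta^*]}\otimes\Lambda^*N_\CC$, whose induced maps in cohomology isolate the distinguished subquotient on both sides.

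The main obstacle I anticipate is making the perturbation argument of the second paragraph rigorous: one must confirm that adding ${\rm Res}_{z=0}G^-(z)$ does not disturb the weight estimates of \cite{chiralrings} that first localize the cohomology to ${\rm Fock}_{(K-\deg)\oplus K^\dual}$ and then to its $H_B=0$ subspace — strong nondegeneracy of $f$ and $g$ enters precisely through those estimates, and the thing to check is that an operator of lattice degree $0$ with $H_A$-degree exactly one (and preserving $H_B$) cannot introduce classes outside the controlled range. A secondary, routine point is convergence of the relevant filtration, which follows from the grading $[m\oplus n]\otimes P\mapsto 2\,m\cdot\deg^\dual+\deg(P)$ being bounded below on ${\rm Fock}_{(K-\deg)\oplus K^\dual}$.
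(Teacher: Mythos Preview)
Your outline has the right endpoints (reduce to the span of the fields \eqref{B}, identify the restricted differential with $\widehat d_{f,g}$ via Proposition \ref{routine}, invoke Definition \ref{defmain}) but it misses one genuinely new ingredient and is imprecise about the spectral sequence in a way that matters.

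\textbf{The missing step: vanishing of $H_B\neq 0$.} The argument of \cite{chiralrings} that you are transporting only pins down the $H_B=0$ part of the cohomology; in the old situation $V_{f,g}$ is much larger than its $B$-chiral ring. Here the claim is that \emph{all} of $\widehat V_{f,g;B}$ is captured by the small complex, i.e.\ the $H_B\neq 0$ part of the $\widehat D_{f,g;B}$-cohomology is zero. Nothing parallel to \cite{chiralrings} gives this; it is a feature specific to the perturbed differential. The paper supplies it by a contracting homotopy: ${\rm Res}_{z=0}\,zG^+(z)$ anti-commutes with the $f$- and $g$-terms of $\widehat D_{f,g;B}$, while the $N=2$ relations give $\{{\rm Res}_{z=0}\,zG^+(z),\,{\rm Res}_{z=0}G^-(z)\}=H_B$. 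Hence on any eigenspace with $H_B\neq 0$ the operator ${\rm Res}_{z=0}\,zG^+(z)$ is a homotopy from $\widehat D_{f,g;B}$ to a nonzero scalar, killing the cohomology there. Your proposal never mentions $G^+$, and without this step you only prove $\big(\widehat V_{f,g;B}\big)^{H_B=0}\cong H_B(X_f,X_g^\dual)$, not the theorem.

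\textbf{The spectral sequence is not the one you describe.} Treating ${\rm Res}_{z=0}G^-(z)$ as a ``perturbation of strictly smaller lattice degree'' of $D_{f,g}$ and filtering by lattice degree runs into convergence trouble on ${\rm Fock}_{M\oplus N}$, where the lattice degree is unbounded below. The paper does not do this. It first reduces from ${\rm Fock}_{M\oplus N}$ to ${\rm Fock}_{M\oplus K^\dual}$ by checking that the operators $R$ of \cite[Prop.~8.1]{borvert} anti-commute with $G^-(z)$ (they are built from $\ee^{\int M^{bos}}$ and $N^{ferm}$ only), so the same $R$'s work for $\widehat D_{f,g;B}$. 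Then on ${\rm Fock}_{M\oplus K^\dual}$ it uses the double grading $(H_A+\bullet\cdot\deg^\dual,\ \deg\cdot\bullet)$ with
\[
d_1={\rm Res}_{z=0}\Big(\sum_{m\in\Delta} f(m)m^{ferm}\ee^{\int m^{bos}}+G^-(z)\Big),\qquad
d_2={\rm Res}_{z=0}\sum_{n\in\Delta^\dual} g(n)n^{ferm}\ee^{\int n^{bos}},
\]
so that the $G^-$ term is grouped with the $f$-part, not separated from $D_{f,g}$. This sequence converges because $\deg\cdot\bullet\geq 0$ on ${\rm Fock}_{M\oplus K^\dual}$, and one takes $d_2$-cohomology first, which is exactly what was computed in \cite{chiralrings} (and where strong nondegeneracy of $g$ enters). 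Your ``lattice perturbation'' grouping would instead force you to compute $D_{f,g}$-cohomology first on an infinite-dimensional page and then run ${\rm Res}_{z=0}G^-(z)$, which neither converges cleanly nor lands directly on $\widehat d_{f,g}$.

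The sandwich argument you propose at the end is unnecessary here: once you know the cohomology is concentrated in $H_B=0$ and comes from ${\rm Fock}_{(K-\deg)\oplus K^\dual}$, the identification with $(\CC[(K\oplus K^\dual)_0]\otimes\Lambda^*N_\CC,\widehat d_{f,g})$ is already canonical, and $H_B(X_f,X_g^\dual)$ is by definition the cohomology of that complex.
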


\begin{proof}
The proof proceeds in several steps. First, we show that the cohomology of 
${\rm Fock}_{M\oplus N}$ with respect to $\widehat D_{f,g;B}$ is naturally 
isomorphic to the cohomology of ${\rm Fock}_{M\oplus K^\dual}$ with 
respect to the same operator. The analogous 
fact for the cohomology with respect to $D_{f,g}$ is  proved in \cite[Proposition 8.2]{borvert}.

The idea of \cite[Proposition 8.2]{borvert} is to construct operators $R$ such
that the anticommutator of $R$ with $\widehat D_{f,g;B}$ is equal to identity plus
an operator that increases a grading with respect to a ray of $K$. It remains to
observe that the operators $R$ constructed in \cite[Proposition 8.1]{borvert} anti-commute with 
$G^-(z)$, because the former are made from $\ee^{\int M^{bos}(z)}$ and $N^{ferm}(z)$ 
fields only, and the latter is made from $M^{bos}(z)$ and $N^{ferm}(z)$. Thus the
same fields $R$ can be used in the new argument.

Now we are dealing with cohomology of ${\rm Fock}_{M\oplus K^\dual}$ with 
respect to $\widehat D_{f,g;B}$. The method of \cite{chiralrings} was to employ
a spectral sequence for the double complex, with the $f$ and $g$ parts of the 
differential acting as horizontal and vertical arrows. This spectral sequence 
does not converge on the whole ${\rm Fock}_{M\oplus N}$, but does converge
on ${\rm Fock}_{M\oplus K^\dual}$.
The situation is a bit different in the current setting, because the additional term
in $\widehat D_{f,g;B}$ destroys the double grading by degree in $M$ and $N$,
however there is still a spectral sequence which we will describe below if we 
use a more subtle double grading.

Consider the operator $H_A$ on ${\rm Fock}_{M\oplus K^\dual}$ that comes
from the $N=2$ structure. Note that  $D_{f,g}$ preserves eigenvalues of $H_A$ 
and ${\rm Res}_{z=0}G^-(z)$ increases $H_A$ by one, as part of the commutator relations
of the $N=2$ algebra. Thus, we can think of $\widehat D_{f,g;B}$ as a differential
of the total complex of the double complex with the double grading
$(H_A+\bullet\cdot \deg^\dual,\deg\cdot\bullet)$ and 
$$
d_1={\rm Res}_{z=0}\Big(
\sum_{m\in \Delta} f(m)m^{ferm}(z)\ee^{\int m^{bos}(z)} 
+G^-(z)
\Big)
$$$$
d_2 = {\rm Res}_{z=0} \sum_{n\in \Delta^\dual} g(n)n^{ferm}(z)\ee^{\int n^{bos}(z)}.
$$
The corresponding spectral sequence that starts with 
$
H_{d_2}({\rm Fock}_{M\oplus K^\dual}) 
$
then converges, since the second grading is nonnegative on 
${\rm Fock}_{M\oplus K^\dual}$. The cohomology with respect to $d_2$  has been
studied in \cite{chiralrings}. In particular, the $H_B=0$ part of the cohomology is zero
for ${\rm Fock}_{c\oplus K^\dual}$ for $c\not\in K-\deg$, provided $g$ is strongly nondegenerate.
Then, as in \cite{chiralrings} we conclude that the $H_B=0$ part of the cohomology
of ${\rm Fock}_{M\oplus K^\dual})$ with respect to $\widehat D_{f,g;B}$
is the $H_B=0$ part of the cohomology of ${\rm Fock}_{K-\deg\oplus K^\dual})$.
As a corollary of Proposition \ref{routine}, we see that the $H_B=0$ part 
of the cohomology of ${\rm Fock}_{M\oplus N}$ with respect to $\widehat D_{f,g;B}$
is naturally isomorphic to $H_B(X_f,X_g^\dual)$. 

It remains to show that $H_B\neq 0$ part of the cohomology vanishes,
which is only a feature of $\widehat V_{f,g;B}$ rather than $V_{f,g}$.
We  use ${\rm Res}_{z=0}zG^{+}(z)$ which anti-commutes with 
the $f$ and $g$ terms of $\widehat D_{f,g;B}$. As part of the 
$N=2$ algebra structure, the anticommutator of ${\rm Res}_{z=0}zG^{+}(z)$ with
the ${\rm Res}_{z=0}G^{-}(z)$  is $H_B$. This shows that the 
cohomology occurs only at $H_B=0$, since in other eigenspaces of $H_B$
the operator ${\rm Res}_{z=0}zG^{+}(z)$ provides homotopy to the identity.
\end{proof}

\begin{corollary}\label{maincoro5}
For strongly nondegenerate $f$ and $g$ the stringy cohomology spaces 
$$H_B(X_f,X_g^\dual)=H_A(X_g^\dual,X_f){\rm~
and~}H_A(X_f,X_g^\dual)=H_B(X_g^\dual,X_f)$$ are equipped with natural
structures of graded associative super-com-mutative algebras.
\end{corollary}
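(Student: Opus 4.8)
The plan is to deduce Corollary \ref{maincoro5} from Theorem \ref{mainthm} by transporting the vertex-algebra product structure along the isomorphism $\widehat V_{f,g;B}\simeq H_B(X_f,X_g^\dual)$. The starting point is that $\widehat V_{f,g;B}$ is by construction the cohomology of the vertex algebra ${\rm Fock}_{M\oplus N}$ with respect to $\widehat D_{f,g;B}$, and $\widehat D_{f,g;B}={\rm Res}_{z=0}$ of a single field. One must first check that this field is \emph{odd} and that its zeroth Fourier mode is a derivation of all $n$-th products (equivalently, that taking ${\rm Res}_{z=0}$ of a sum of $G^-(z)$ with the $f,g$ vertex operators yields an odd derivation of the vertex algebra). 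Since ${\rm Res}_{z=0}G^-(z)$ is one of the standard $N=2$ supercharges and the $f,g$ terms are exactly the operators used in \cite{borvert,chiralrings}, this is immediate. Hence $\widehat V_{f,g;B}$ inherits from ${\rm Fock}_{M\oplus N}$a graded super-commutative associative product: the induced $(-1)$-st product (normally ordered product) on cohomology, which is automatically associative and super-commutative on any vertex algebra cohomology because the singular part of the OPE becomes exact. One then invokes Theorem \ref{mainthm} to carry this product to $H_B(X_f,X_g^\dual)$.

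The next step is to match the gradings. The product on $\widehat V_{f,g;B}$ coming from the normally ordered product is graded by $H_B=L[0]+\tfrac12 J[0]$ (which by the proof of Theorem \ref{mainthm} is identically zero on $\widehat V_{f,g;B}$, so this is vacuous) together with a residual integer grading; I would check that under the natural isomorphism of Theorem \ref{mainthm} this matches the grading $[m\oplus n]\otimes P\mapsto 2\,m\cdot\deg^\dual+\deg(P)$ of Definition \ref{defmain}. Concretely, the element of ${\rm Fock}$ corresponding to $[m\oplus n]\otimes P$ is the field $P(N^{ferm})(z)\,\ee^{\int(m-\deg)^{bos}(z)+n^{bos}(z)}$ from \eqref{B}, and one computes $L[0]$ and $J[0]$ on such a field directly from the formulas for $L(z)$ and $J(z)$; this is a routine lattice vertex algebra computation of the same flavor as Proposition \ref{routine}, and it reproduces the stated grading. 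This establishes that $H_B(X_f,X_g^\dual)$ is a \emph{graded} associative super-commutative algebra.

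For the $A$-side, the cleanest route is to observe that $H_A(X_f,X_g^\dual)$ was \emph{defined} in Definition \ref{defmain} as $H_B(X_g^\dual,X_f)$ — that is, as the $B$-space of the mirror pair with the roles of $(K,f)$ and $(K^\dual,g)$ interchanged and $\Lambda^*N_\CC$ replaced by $\Lambda^*M_\CC$. Swapping $M\leftrightarrow N$, $K\leftrightarrow K^\dual$, $f\leftrightarrow g$, $\deg\leftrightarrow\deg^\dual$ in all of the above (in particular in ${\rm Fock}_{M\oplus N}$, which is symmetric, and in the $N=2$ structure, under which this swap exchanges $G^+\leftrightarrow G^-$ and negates $J$, hence exchanges $H_A\leftrightarrow H_B$) shows that Theorem \ref{mainthm} and the product construction apply verbatim to $H_B(X_g^\dual,X_f)=H_A(X_f,X_g^\dual)$, using ${\rm Res}_{z=0}G^+(z)$ in place of ${\rm Res}_{z=0}G^-(z)$ in the definition of the modified differential $\widehat D_{f,g;A}$. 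Strong nondegeneracy of $f$ and $g$ is symmetric in $f,g$, so the hypothesis is preserved. This gives the second displayed identification and its algebra structure.

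The main obstacle I anticipate is not the construction of the product — that is formal once Theorem \ref{mainthm} is in hand — but verifying that the product is \emph{well-defined on cohomology}, i.e. that $\widehat D_{f,g;B}$ acts as an odd derivation so that products of cocycles are cocycles and products with coboundaries are coboundaries, together with the compatibility of the grading. The subtlety flagged in the remark after Corollary \ref{Balg} is relevant here: the normally ordered product of two fields of the form \eqref{B} a priori lands in ${\rm Fock}_{(K-2\deg)\oplus K^\dual}$ rather than ${\rm Fock}_{(K-\deg)\oplus K^\dual}$, so one cannot read off the product purely combinatorially; one must argue at the level of vertex algebra cohomology that it reduces back. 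However, for the present corollary this difficulty is entirely absorbed into Theorem \ref{mainthm}, which already identifies the cohomology: the product is simply the one on vertex algebra cohomology, and its existence does not require writing it down explicitly. Thus the remaining work is the bookkeeping of step two (the grading computation) and the mirror symmetry of step three, both of which are mechanical.
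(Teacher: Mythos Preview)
Your overall strategy matches the paper's: transport the vertex-algebra product via Theorem~\ref{mainthm}, and obtain the $A$-case by swapping $M\leftrightarrow N$. But there is a real gap in the step where you claim the product is associative and super-commutative.

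You assert that the normally ordered product ``is automatically associative and super-commutative on any vertex algebra cohomology because the singular part of the OPE becomes exact.'' That is false in general. If $D=a_{(0)}$ is the zeroth mode of an odd field, then $D$ is indeed a derivation of all $n$-th products, so every $u_{(n)}v$ descends to cohomology; but there is no mechanism forcing the singular terms $u_{(n)}v$, $n\geq 0$, to be \emph{exact}. (Take $a=0$ for a trivial counterexample: the cohomology is the whole vertex algebra, and the normally ordered product is neither commutative nor associative.) So your justification does not go through as stated.

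The paper's argument supplies exactly the missing ingredient, and ironically you already wrote it down and then discarded it as ``vacuous'': the concentration of $\widehat V_{f,g;B}$ in $H_B=0$, established in the proof of Theorem~\ref{mainthm} via the homotopy ${\rm Res}_{z=0}\,zG^+(z)$. Write the OPE as $u(z)v(w)=\sum_{k\geq -l}(z-w)^k c_k(w)$. If the lifts $u,v$ are chosen with $H_B=0$, then $c_k$ has $H_B$-eigenvalue $k$ (since $L[0]$ shifts by $k$ under $u_{(-k-1)}v$ while $J[0]$ is additive). Hence every $c_k$ with $k\neq 0$ is zero in $\widehat V_{f,g;B}$, and only $c_0$ survives. \emph{That} is what kills the singular OPE and makes the induced product on cohomology super-commutative (via skew-symmetry) and associative (via the Borcherds identity). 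Your grading discussion and the $M\leftrightarrow N$ swap for the $A$-case are fine once this point is repaired.
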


\begin{proof}
It is sufficient to consider $H_B(X_f,X_g^\dual)= \widehat V_{f,g;B}$, the other is obtained by
switching $M$ and $N$. Take two 
$\widehat u$ and $\widehat v$ elements in $\widehat V_{f,g;B}$, and 
lift them to $u,v\in {\rm Fock}_{M\oplus N}$. Then take the operator product expansion
of the corresponding fields 
$$
u(z)v(w) = \sum_{k\geq -l}(z-w)^k c_k(w).
$$
Each $c_k$ is annihilated by $\widehat D_{f,g;B}$ and thus descends to cohomology.
Moreover, only $c_0$ has $H_B=0$ and can be nonzero in the cohomology.
We define the product by $\widehat u*\widehat v:=\widehat c_0$. Standard vertex 
algebra considerations then assure that this product is (super)commutative and associative.
\end{proof}

\begin{remark}
The $H_B$ ring is in fact graded by what was called in \cite{chiralrings} the sum
of fermion number and cohomology grading. It coincides with the total grading
of the double complex in the proof of Theorem \ref{mainthm}. It also coincides 
with the grading of Definition \ref{defmain}. 
\end{remark}

\section{Concluding remarks and open questions}\label{sec6}
One can glean from the argument that in order 
to define the product structure on $H_B(X_f,X_g^\dual)$
one does not need both $f$ and $g$ to be strongly nondegenerate.
Rather, one needs $g$ to be strongly nondegenerate
while usual non-degeneracy suffices for $f$.
The spaces themselves are  defined and are of constant dimension 
with just a simple non-degeneracy assumption on both $f$ and $g$.
It is possible that strong non-degeneracy is an artifact of the argument, rather than a consequence
of product structure acquiring poles at nondegenerate by not strongly nondegenerate $g$.

Can one describe the product on the stringy cohomology in terms of commutative algebra,
thus avoiding vertex algebra machinery? This has been a recurring question for the last ten years
or so. However, it is possible that the extra term ${\rm Res}_{z=0} G^-(z)$ in the differential may 
allow for an easier reduction from ${\rm Fock}_{(K-2\deg)\oplus K^\dual}$ to 
${\rm Fock}_{(K-\deg)\oplus K^\dual}$ which lies at the heart of the matter.

The structures described in this paper correspond to small quantum cohomology.
What is the big quantum cohomology for these examples? In other words, we need to 
construct Frobenius manifolds based on the stringy cohomology.

Similarly one can ask whether the construction of this paper can lead to an algebraic 
construction of the $A_\infty$ categories of boundary conditions in the open string theory.
When dual Gorenstein cones $(K,K^\dual)$  lead to complete intersection Calabi-Yau varieties,
this would amount to an alternative description of Fukaya category and derived category 
of coherent sheaves, and might lead to a verification of Homological Mirror Symmetry.

The dual Gorenstein cones construction has been recently generalized in two directions. 
In \cite{BorBH}, the duality condition has been relaxed a bit, to include the examples of Berglund
and H\"ubsch. It appears that the arguments of this paper should extend to
the setting of almost dual Gorenstein cones, but this needs to be checked. Another generalization, studied in \cite{BK}, modifies the differential $D_{f,g}$ by allowing more general linear combinations of the fermionic fields. The resulting algebra no longer has $N=2$ structure 
 and is related to $N=(0,2)$ nonlinear sigma models.  It would be interesting to 
see to what extent the definitions of this paper extend to this more general setting.

\end{document}